\newtheorem{theorem}{Theorem}[section]
\newtheorem{lemma}[theorem]{Lemma}
\newtheorem{proposition}[theorem]{Proposition}
\newcommand{\minusre}{\hspace{0.3em}\raisebox{0.3ex}{\sl \tiny /}\hspace{0.3em}}
\newcommand{\minusli}{\hspace{0.3em}\raisebox{0.3ex}{\sl \tiny $\setminus $}\hspace{0.3em}}
\newcommand{\lex}{\,\overrightarrow{\times}\,}
\newcommand{\lexphi}{\,\overrightarrow{\times_\phi}\,}
\newcommand{\lsemiphi}{\,\overrightarrow{\ltimes_\phi}\,}
\newcommand{\RDP}{\mbox{\rm RDP}}
\newcommand{\RIP}{\mbox{\rm RIP}}
\begin{document}
\title[Kite $n$-Perfect Pseudo Effect Algebras]{Kite $n$-Perfect Pseudo Effect Algebras}
\author[Michal Botur, Anatolij Dvure\v{c}enskij]{Michal Botur$^1$, Anatolij Dvure\v censkij$^{1,2}$}
\date{}%
\thanks{Both authors gratefully acknowledge  the support by GA\v{C}R 15-15286S.  AD thanks also Slovak Research and Development Agency under contract APVV-0178-11,  grant VEGA No. 2/0059/12 SAV}
\address{$^1$Palack\' y University Olomouc, Faculty of Sciences, t\v r. 17.listopadu 12, CZ-771 46 Olomouc, Czech Republic}
\address{$^2$Mathematical Institute, Slovak Academy of Sciences, \v{S}tef\'anikova 49, SK-814 73 Bratislava, Slovakia}
\email{michal.botur@upol.cz, dvurecen@mat.savba.sk}

\keywords{Effect algebra, pseudo effect algebra, po-group, kite pseudo effect algebra, kite $n$-perfect pseudo effect algebra, Riesz decomposition property, subdirect irreducibility }
\subjclass[2010]{03G12,  81P15,  03B50}

\maketitle

\begin{abstract}
Kite pseudo effect algebras were recently introduced as a class of interesting examples of pseudo effect algebras using a po-group, an index set and two bijections on the index set.
We represent kite pseudo effect algebras with a special kind of the Riesz decomposition property as an interval in a lexicographic extension of the po-group which solves an open problem on representation of kites. In addition, we introduce kite $n$-perfect pseudo effect algebras and we characterize subdirectly irreducible algebras which are building stones of the theory.
\end{abstract}

\section{Introduction}

Theory of quantum structures is a mathematical theory motivated by mathematical foundations of quantum mechanics. It goes back to a fundamental paper by Birkhoff and von Neumann, \cite{BiNe}. The so-called Hilbert space quantum mechanics is modeled by the system $\mathcal L(H)$ of all closed subspaces of a real, complex or quaternionic Hilbert space $H$. The events in $\mathcal L(H)$ have a yes-no character. 20 years ago there was presented a new model, effect algebras, introduced in \cite{FoBe}, whose the most physically and mathematically important example is the system $\mathcal E(H)$ of all Hermitian operators of a Hilbert space $H$ which are between the zero operator, $O$, and the identity, $I$. If $\mathcal B(H)$ is the system of Hermitian operators on $H$, then $\mathcal B(H)$ is a partially ordered group (= po-group), where for two Hermitian operators $A$ and $B$ we write $A \le B$ iff $(Ax,x)\le (Bx,x)$ for all unit vectors $x \in H$. Then the events of $\mathcal E(H)$ have a fuzzy character, i.e. their spectra are subsets of the real interval $[0,1]$. In addition, observables in $\mathcal E(H)$ correspond to POV-measures, whereas in $\mathcal L(H)$ they correspond to  projector-valued measures and correspondingly to self-adjoint operators.

In general, effect algebras are partial algebra where the basic notion is addition $+$ of two mutually excluding events, equivalently, $a+b$ means disjunction of two mutually excluding events $a$ and $b$. Another important mathematical feature of effect algebras is the fact that many of effect algebras are intervals in the positive cone of po-groups. This is possible e.g. whenever the effect algebra satisfies the Riesz Decomposition Property, i.e. a property when every two decompositions of the same element have a joint refinement. A special class of effect algebras are MV-algebras, and they play an analogous role for effect algebras as Boolean algebras do for orthomodular lattices and posets. For more information on effect algebras, we recommend \cite{DvPu}.

Nevertheless neither RDP does hold for  the effect algebra $\mathcal E(H)$ of a separable complex Hilbert space $H$ nor $\mathcal E(H)$ is a lattice, $\mathcal E(H)$ can be covered by a system of MV-algebras, see \cite{Pul}. In other words, RDP does not hold globally for $\mathcal E(H)$ rather locally, \cite[Thm 3.4]{DvuR}.

Recently, a non-commutative generalization of effect algebras, called {\it pseudo effect algebras}, was introduced in \cite{DvVe1, DvVe2}. They generalize a non-commutative generalization of MV-algebras called {\it pseudo MV-algebras} in \cite{GeIo} or, equivalently, {\it GMV-algebras} in \cite{Rac}. In these algebras there is not more assumed that $+$ is necessarily commutative. Some physical motivations for pseudo effect algebras with possible physical situations in quantum mechanics were presented in \cite{DvVe4}. Also in such a case, some pseudo effect algebras are intervals in po-groups not necessarily Abelian. This is true e.g. when a pseudo effect algebra satisfies a stronger type of RDP, namely RDP$_1$.

Non-commutative operations, for example multiplication of matrices, are well-known both in mathematics and physics and their applications. In particular, the class of square  matrices of the form
$$ A(a,b)=
\left( \begin{array}{cc}
a & b \\
0& 1
\end{array}
\right)
$$
for $a>0$, $b \in (-\infty,\infty)$ with usual multiplication of matrices is a non-commutative linearly ordered group with the neutral element $A(1,0)$ and with the positive cone consisting of matrices $A(a,b)$ with $a>1$ or $a=1$ and $b\ge0$. It gives an example of a pseudo effect algebra with a strong type of RDP, namely RDP$_2$; in addition it is an example of a pseudo MV-algebra. We note that $A(a,b)$ is an extension of real numbers: If $b=0$, then $A(a,0)$ is a positive real number and if $b\ne 0$, then $A(a,b)$ denotes some kind of a generalized number (non-standard number) such that $A(a,b)$ is infinitely close to $A(a,0)$ but bigger than $A(a,0)$, and similarly if $b<0$, then $A(a,b)$ is also infinitely closed to $A(a,0)$ but smaller than $A(a,0)$, \cite{Haj}.

Every theory is so good as it possesses a large class of interesting examples. It is worthy to note that also starting with the positive and negative cone $G^+$ and $G^-$ of a po-group $G$, Abelian or non-Abelian, with an index set $I$ and with two bijections $\lambda,\rho: I\to I$ and with the ordinal sum $(G^+)^I$ at bottom and with $(G^-)^I$ above, we can obtain a class of pseudo effect algebras called in \cite{DvuK} {\it kite pseudo effect algebras}. Their names was motivated by an example from \cite{JiMo} which resembles the shape of a kite.  Their basic properties were developed in \cite{DvuK, DvHo}. We note that starting with an Abelian po-group, e.g. the group of integers, the resulting algebraic structure is not necessarily commutative.

In \cite{DvuK}, there was shown that if $G$ is a po-group with RDP$_1$, then the corresponding kite pseudo effect algebra $K_{I}^{\lambda,\rho}(G)$ has also RDP$_1$, so it has to be an interval in some unital po-group $(H,u)$ with RDP$_1$. But it was unknown how such a unital po-group looks like. Motivating by this problem, we present in this paper its solution, and we show that such a po-group is a special form of the so-called lexicographic extension $G$, see \cite[p. 11]{Gla}.

In addition, we introduce a new class of pseudo effect algebras, called kite $n$-perfect pseudo effect algebras. Such an algebra is characterized by a property that it can be decomposed into $(n+1)$ comparable slices. Special algebras with this property were studied e.g. in \cite{DXY, DvKr, DvXi} under the name $n$-perfect pseudo effect algebras; in this special case $|I|=1$. For kite $n$-perfect pseudo effect algebras, we study the building blocks of the theory~-- subdirectly irreducible kite $n$-perfect pseudo effect algebras.

The paper is organized as follows. The second section gathers the basic notions and results on the theory of pseudo effect algebras and po-groups. The third section introduces kite pseudo effect algebras. It presents a solution to a problem on a characterization of a kite by an interval in a po-group with RDP$_1$. We show that the solution is connected with a so-called lexicographic extension of the po-group. Therefore, the main effort is concentrated to show that such a lexicographic extension satisfies RDP$_1$. This important result is proved in two different ways. Finally, Section 4 introduces kite $n$-perfect pseudo effect algebras. These algebras generalize kite pseudo effect algebras; more precisely, a kite $n$-perfect pseudo effect algebra is a kite pseudo effect algebra iff $n=1$. For this class we will study subdirectly irreducible algebras which are building elements of the theory of quantum structures. Finally, Section 5 gives concluding remarks.

\section{Basic Notions on Pseudo Effect Algebras}

A non-commutative extension of effect algebras are pseudo effect algebras. We note that according to \cite{DvVe1, DvVe2}, a {\it pseudo effect algebra} is  a partial algebra  $ E=(E; +, 0, 1)$, where $+$ is a partial binary operation and $0$ and $1$ are constants, such that for all $a, b, c
\in E$, the following holds

\begin{enumerate}
\item[(i)] $a+b$ and $(a+b)+c$ exist if and only if $b+c$ and
$a+(b+c)$ exist, and in this case $(a+b)+c = a+(b+c)$;

\item[(ii)]
  there is exactly one $d \in E$ and
exactly one $e \in E$ such that $a+d = e+a = 1$;

\item[(iii)]
 if $a+b$ exists, there are elements $d, e
\in E$ such that $a+b = d+a = b+e$;

\item[(iv)] if $1+a$ or $a+1$ exists, then $a = 0$.
\end{enumerate}

If we define $a \le b$ if and only if there exists an element $c\in
E$ such that $a+c =b,$ then $\le$ is a partial ordering on $E$ such
that $0 \le a \le 1$ for any $a \in E.$ It is possible to show that
$a \le b$ if and only if $b = a+c = d+a$ for some $c,d \in E$. We
write $c = a \minusre b$ and $d = b \minusli a.$ Then

$$ (b \minusli a) + a = a + (a \minusre b) = b,
$$
and we write $a^- = 1 \minusli a$ and $a^\sim = a\minusre 1$ for any
$a \in E.$ Then $a^-+a=1=a+a^\sim$ and $a^{-\sim}=a=a^{\sim-}$ for any $a\in E.$

For basic properties of pseudo effect algebras see \cite{DvVe1, DvVe2}. We note that a pseudo effect algebra is an {\it effect algebra}  iff $+$ is commutative. Hence, this definition corresponds to the definition of an effect algebra introduced in \cite{FoBe}.

Many examples of pseudo effect algebras are connected with intervals in the positive cones of po-groups. We recall that a {\it po-group} (= partially ordered group) is a quintuple $G=(G;+,-,0,\le)$, where
$(G;+,-,0)$ is a group endowed with a partial order $\le$ such that if $a\le b,$ $a,b
\in G,$ then $x+a+y \le x+b+y$ for all $x,y \in G.$  We denote by
$G^+:=\{g \in G: g \ge 0\}$ and $G^-:=\{g \in G: g \le 0\}$ the {\it positive cone} and the {\it negative cone} of $G.$ If, in addition, $G$
is a lattice under $\le$, we call it an $\ell$-{\it group} (= lattice
ordered group). An element $u \in G^+$ is said to be a {\it strong unit} (or an {\it order unit}) if, given $g \in G,$ there is an integer $n \ge 1$ such that $g \le nu.$ The pair $(G,u),$ where $u$ is a fixed strong unit of $G,$ is said to be a {\it unital po-group}. We recall that  the {\it lexicographic product} of two po-groups $G_1$ and $G_2$ is the group $G_1\times G_2,$ where the group operations are defined by coordinates, and the ordering $\le $ on $G_1 \times G_2$ is defined as follows: For $(g_1,h_1),(g_2,h_2) \in G_1 \times G_2,$  we have $(g_1,h_1)\le (g_2,h_2)$  whenever (i) $g_1 <g_2$ or (ii) $g_1=g_2$ and $h_1\le h_2.$

We denote by  $\mathbb Z$ and $\mathbb R$ the commutative $\ell$-group of integers and the group of real numbers, respectively, and let $\mathbb N=\mathbb Z^+$.

A po-group $G$ is said to be {\it directed} if, given $g_1,g_2 \in G,$ there is an element $g \in G$ such that $g \ge g_1,g_2.$ Equivalently, $G$ is directed iff every element $g\in G$ can be expressed as a difference of two positive elements of $G.$ For example, every $\ell$-group or every  po-group with strong unit is directed.
For more information on po-groups and $\ell$-groups we recommend the books \cite{Dar, Fuc, Gla}.

Let  $(G;+,-,0,\le)$ be a po-group written in an additive form and fix $u \in G^+.$ If we set $\Gamma(G,u):=[0,u]=\{g \in G: 0 \le g \le u\},$ then
$$
\Gamma(G,u)=(\Gamma(G,u); +,0,u)\eqno(2.1)
$$
is a pseudo effect algebra, where $+$ is the restriction of the group addition $+$ to $[0,u],$ i.e. $a+b$ is defined in $\Gamma(G,u)$ for $a,b \in \Gamma(G,u)$ iff $a+b \in \Gamma(G,u).$ Then $a^-=u-a$ and $a^\sim=-a+u$ for any $a \in \Gamma(G,u).$ A pseudo effect algebra which is isomorphic to some $\Gamma(G,u)$ for some po-group $G$ with $u>0$ is said to be an {\it interval pseudo effect algebra}.

A pseudo effect algebra $E$ is said to be {\it  symmetric} if $a^-=a^\sim$ for each $a\in E.$ For example, if $G$ is a non-commutative po-group, then
$\Gamma(\mathbb Z \lex G, (1,0))$ is a symmetric pseudo effect algebra that is not an effect algebra.

Let $A,B$ be two subsets of a pseudo effect algebra $E$. We write (i) $A+B:=\{a+b: a\in A, b \in B, \mbox{ such that } a+b \mbox{ is defined in } E\}$, and we say that $A+B$ is {\it defined} if $a+b$ is defined in $E$ for all $a \in A$ and all $b \in B$. (ii) $A\preceq B$ if $a\le b $ for all $a \in A$ and all $b \in B$.

An {\it ideal} of a pseudo effect algebra $E$ is any non-empty subset $I$ of $E$ such that (i) if $x,y \in I$ and $x+y$ is defined in $E,$ then $x+y \in I,$ and (ii) $x\le y \in I$ implies $x\in I.$ An ideal $I$ is (i) {\it normal} if $x+I=I+x$ for any $x \in E,$ where $x+I:=\{x+y: y \in I,\ x+y $ exists in $E\}$ and in the dual way  we define $I+x$; (ii) {\it maximal} if it is a proper subset of $E$ and if $I$ is a subset of a proper ideal $J$ of $E$, then $I=J$.

A mapping $h$ from one pseudo effect algebra $E$ into the second one $F$ is said to be a {\it homomorphism} if (i) $h(1)=1$, (ii) if $a+b$ is defined in $E$ so is defined $h(a)+h(b)$ in $F$ and $h(a+b)=h(a)+h(b)$. A bijective homomorphism is an {\it isomorphism} if $h$ and $h^{-1}$ are homomorphisms.

A {\it state} on a pseudo effect algebra $E$ is a mapping $s: E\to [0,1]$ such that (i) $s(1)=1$, and (ii) $s(a+b)=s(a)+s(b)$. In other words, a state on $E$ is any homomorphism from $E$ into the effect algebra $\Gamma(\mathbb R,1)$. We note that there are stateless pseudo effect algebras.

The Riesz decomposition property for effect algebras or for Abelian po-groups is very important. It denotes the property that every two decompositions of the same element have a joint refinement decomposition. For pseudo effect algebras and po-groups not necessarily Abelian, the following kinds of the Riesz Decomposition properties were introduced in \cite{DvVe1,DvVe2}.

We say that an additively written  po-group $(G;+,-,0,\le)$ satisfies

\begin{enumerate}
\item[(i)]
the {\it Riesz Interpolation Property} (RIP for short) if, for $a_1,a_2, b_1,b_2\in G,$  $a_1,a_2 \le b_1,b_2$  implies there exists an element $c\in G$ such that $a_1,a_2 \le c \le b_1,b_2;$

\item[(ii)]
\RDP$_0$  if, for $a,b,c \in G^+,$ $a \le b+c$, there exist $b_1,c_1 \in G^+,$ such that $b_1\le b,$ $c_1 \le c$ and $a = b_1 +c_1;$

\item[(iii)]
\RDP\  if, for all $a_1,a_2,b_1,b_2 \in G^+$ such that $a_1 + a_2 = b_1+b_2,$ there are four elements $c_{11},c_{12},c_{21},c_{22}\in G^+$ such that $a_1 = c_{11}+c_{12},$ $a_2= c_{21}+c_{22},$ $b_1= c_{11} + c_{21}$ and $b_2= c_{12}+c_{22};$

\item[(iv)]
\RDP$_1$  if, for all $a_1,a_2,b_1,b_2 \in G^+$ such that $a_1 + a_2 = b_1+b_2,$ there are four elements $c_{11},c_{12},c_{21},c_{22}\in G^+$ such that $a_1 = c_{11}+c_{12},$ $a_2= c_{21}+c_{22},$ $b_1= c_{11} + c_{21}$ and $b_2= c_{12}+c_{22}$, and $0\le x\le c_{12}$ and $0\le y \le c_{21}$ imply  $x+y=y+x;$

\item[(v)]
\RDP$_2$  if, for all $a_1,a_2,b_1,b_2 \in G^+$ such that $a_1 + a_2 = b_1+b_2,$ there are four elements $c_{11},c_{12},c_{21},c_{22}\in G^+$ such that $a_1 = c_{11}+c_{12},$ $a_2= c_{21}+c_{22},$ $b_1= c_{11} + c_{21}$ and $b_2= c_{12}+c_{22}$, and $c_{12}\wedge c_{21}=0.$

\end{enumerate}

If, for $a,b \in G^+,$ we have for all $0\le x \le a$ and $0\le y\le b,$ $x+y=y+x,$ we denote this property by $a\, \mbox{\rm \bf com}\, b.$

The RDP will be denoted by the following table:

$$
\begin{matrix}
a_1  &\vline & c_{11} & c_{12}\\
a_{2} &\vline & c_{21} & c_{22}\\
  \hline     &\vline      &b_{1} & b_{2}
\end{matrix}\ \ .
$$

For Abelian po-groups, RDP, RDP$_1,$ RDP$_0$ and RIP are equivalent.

By \cite[Prop 4.2]{DvVe1} for directed po-groups, we have
$$
\RDP_2 \quad \Rightarrow \RDP_1 \quad \Rightarrow \RDP \quad \Rightarrow \RDP_0 \quad \Leftrightarrow \quad  \RIP,
$$
but the converse implications do not hold, in general.  A directed po-group $G$ satisfies \RDP$_2$ iff $G$ is an $\ell$-group, \cite[Prop 4.2(ii)]{DvVe1}.

We say that a pseudo effect algebra $E$ satisfies the above types of the Riesz decomposition properties, if in the definition of RDP's, we change $G^+$ to $E.$

A principal representation result on pseudo effect algebras with RDP$_1$ is the following result \cite[Thm 7.2]{DvVe2}:

\begin{theorem}\label{th:2.1}
For every pseudo effect algebra with \RDP$_1,$ there is a unique (up to isomorphism of unital po-groups) unital po-group $(G,u)$ with \RDP$_1$\ such that $E \cong \Gamma(G,u).$

In addition, $\Gamma$ defines a categorical equivalence between the category of unital po-groups with \RDP$_1$ and the category of pseudo effect algebras with \RDP$_1$.
\end{theorem}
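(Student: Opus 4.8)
The plan is to realize $(G,u)$ as the universal (enveloping) group of $E$, adapting the Mundici--Ravel construction for MV-algebras and effect algebras but keeping track of the \emph{order} of summands, and to show that \RDP$_1$ --- in particular its commutation clause --- is exactly what is needed for the non-commutative bookkeeping to close up. First I would build the positive cone directly from $E$. Let $\mathcal W$ be the set of all finite sequences $(a_1,\dots,a_n)$ of elements of $E$, and call $(b_1,\dots,b_m)$ a \emph{refinement} of $(a_1,\dots,a_n)$ if it arises by replacing some entry $a_i$ by a pair $(c,d)$ with $c+d=a_i$ defined in $E$, together with deletion of entries equal to $0$. Declare two sequences equivalent when they admit a common refinement, and let $M:=\mathcal W/{\sim}$. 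The first technical task is to prove, using \RDP\ and \RDP$_1$, that this relation is transitive, so that $M$ is a monoid under concatenation with identity the class of the empty sequence. This is the step where refinements of two sequences must be amalgamated into one: \RDP$_1$ supplies the refinement table, while its clause ``$0\le x\le c_{12}$ and $0\le y\le c_{21}$ imply $x+y=y+x$'' is what permits reordering the cross terms so that the two amalgamated refinements agree as \emph{ordered words}, not merely as multisets.

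Next I would show $M$ is cancellative. Here too the commutation clause of \RDP$_1$ is the crux: cancelling a common block forces the elements lying below the ``cross'' entries $c_{12},c_{21}$ to commute, which is precisely what \RDP$_1$ guarantees. Granting cancellativity, $M$ is directed (every class is a concatenation of classes $[(a)]$ with $a\in E$) and satisfies \RDP, which furnishes the common-upper-bound, Ore-type property needed to put two differences over a common denominator; hence $M$ embeds into a group of two-sided differences $G=\{x-y:x,y\in M\}$ whose positive cone is the image of $M$, ordered by $x\le y$ iff $y-x\in M$. Setting $u:=[(1)]$, every $a\in E$ satisfies $a\le 1$ and every element of $M$ is a concatenation of one-element classes, so $u$ is a strong unit and $(G,u)$ is a unital po-group.

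The representation is then the map $\varphi\colon E\to\Gamma(G,u)$, $a\mapsto[(a)]$. I would verify that $\varphi$ is a bijection onto $[0,u]$ and that $a+b$ is defined in $E$ exactly when $[(a)]+[(b)]\le u$, with $\varphi(a+b)=\varphi(a)+\varphi(b)$, giving $E\cong\Gamma(G,u)$. Transporting \RDP$_1$ from $E$ to $G^+$ is routine once one observes that every positive element is a finite sum of elements of $[0,u]$ and that comparabilities inside $[0,u]$ are governed by $E$, so the commutation clause survives. For uniqueness I would establish the universal property: every homomorphism of pseudo effect algebras $E\to\Gamma(H,v)$ extends uniquely to a unital po-group homomorphism $G\to H$. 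Applying this to a competing representation $(G',u')$ and to its inverse yields mutually inverse unital isomorphisms $G\cong G'$ carrying $u$ to $u'$. Finally, reading the construction as the object part of a functor and $\Gamma$ (acting on unital morphisms by restriction to intervals) as its candidate inverse, the universal property promotes the isomorphisms $\varphi$ to a natural isomorphism, and the same argument on the group side gives the reverse natural isomorphism, establishing the categorical equivalence.

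I expect the main obstacle to lie entirely in the first two steps: transitivity of the common-refinement relation and cancellativity of $M$. In the commutative (effect-algebra) case ordinary \RDP\ suffices, because sequences may be handled as multisets; in the present non-commutative setting the order of summands is essential, and it is exactly the commutation requirement built into \RDP$_1$ that forces the two amalgamated refinements to coincide as ordered words and makes cancellation legitimate. Everything downstream of cancellativity --- the difference group, the strong unit, the representation, the transfer of \RDP$_1$, and the universal property yielding uniqueness and the equivalence --- is the standard enveloping-group machinery.
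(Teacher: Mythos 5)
The paper gives no proof of this statement---it is imported verbatim from \cite[Thm 7.2]{DvVe2}---but your outline reconstructs essentially the argument used there: the monoid of finite words modulo common refinement, with the commutation clause of \RDP$_1$ doing the real work in establishing transitivity of the refinement relation and cancellativity, followed by the Ore-type embedding into the group of differences (reversibility coming from the fact that any two positive elements sit below a common multiple of $u$), and the universal property yielding uniqueness and the categorical equivalence. Your diagnosis of where the difficulty is concentrated, and of why plain \RDP\ suffices in the commutative case but the ordered-word bookkeeping forces \RDP$_1$ here, matches the actual proof; the approach is sound.
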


A very important subclass of pseudo effect algebras consists of pseudo MV-algebras, a non-commutative generalization of MV-algebras, which were introduced independently in \cite{GeIo} as pseudo MV-algebras and in \cite{Rac} as generalized MV-algebras. We remind  that according to \cite{GeIo}, a {\it pseudo MV-algebra} is an algebra $(M;
\oplus,^-,^\sim,0,1)$ of type $(2,1,1,$ $0,0)$ such that the
following axioms hold for all $x,y,z \in M$ with an additional
binary operation $\odot$ defined via $$ y \odot x =(x^- \oplus y^-)
^\sim $$
\begin{enumerate}
\item[{\rm (A1)}]  $x \oplus (y \oplus z) = (x \oplus y) \oplus z;$

\item[{\rm (A2)}] $x\oplus 0 = 0 \oplus x = x;$

\item[{\rm (A3)}] $x \oplus 1 = 1 \oplus x = 1;$

\item[{\rm (A4)}] $1^\sim = 0;$ $1^- = 0;$

\item[{\rm (A5)}] $(x^- \oplus y^-)^\sim = (x^\sim \oplus y^\sim)^-;$

\item[{\rm (A6)}] $x \oplus (x^\sim \odot y) = y \oplus (y^\sim
\odot x) = (x \odot y^-) \oplus y = (y \odot x^-) \oplus
x;$\footnote{$\odot$ has a higher priority than $\oplus$.}

\item[{\rm (A7)}] $x \odot (x^- \oplus y) = (x \oplus y^\sim)
\odot y;$

\item[{\rm (A8)}] $(x^-)^\sim= x.$
\end{enumerate}
If we define $a\le b$ iff there is an element $c\in M$ such that $a\oplus c=b$, then $\le$ is a partial order and $a\vee b = (a^*\oplus b)^*\oplus b.$  With respect to this order, $M$ is a distributive lattice. We note that a pseudo effect algebra is an {\it MV-algebra} iff $\oplus$ is commutative.

For example, if $u$ is a strong unit of a (not necessarily Abelian)
$\ell$-group $G$,
$$\Gamma(G,u) := [0,u]
$$
and
\begin{eqnarray*}
x \oplus y &:=&
(x+y) \wedge u,\\
x^- &:=& u - x,\\
x^\sim &:=& -x +u,\\
x\odot y&:= &(x-u+y)\vee 0,
\end{eqnarray*}
then $\Gamma(G,u)=(\Gamma(G,u);\oplus, ^-,^\sim,0,u)$ is a pseudo MV-algebra.

The basic result on theory of pseudo MV-algebras \cite{Dvu1} is the following representation theorem.

\begin{theorem}\label{th:2.2}
Every pseudo MV-algebra is an interval $\Gamma(G,u)$ in a unique (up to isomorphism)  unital $\ell$-group $(G,u).$

In addition, the functor $\Gamma: (G,u) \mapsto \Gamma(G,u)$ defines a categorical equivalence between the category of unital $\ell$-groups and the variety of pseudo MV-algebras.
\end{theorem}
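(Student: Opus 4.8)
The plan is to derive Theorem~\ref{th:2.2} from the general representation Theorem~\ref{th:2.1} and then to upgrade the representing po-group to an $\ell$-group. First I would regard a pseudo MV-algebra $M=(M;\oplus,{}^-,{}^\sim,0,1)$ as a pseudo effect algebra with the partial sum $a+b:=a\oplus b$ declared defined exactly when $a\le b^-$, and check that this reduct satisfies $\RDP_2$, hence also $\RDP_1$. The key input is that $M$ is a distributive lattice under $\le$ (as recalled just before the statement), so a refinement of a given $a_1+a_2=b_1+b_2$ can be produced from the lattice operations: put $c_{11}:=a_1\wedge b_1$ and read off $c_{12},c_{21},c_{22}$ from the left/right differences $\minusli,\minusre$, and then verify the extra condition $c_{12}\wedge c_{21}=0$ using distributivity together with the pseudo MV-identities (A6)--(A7). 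This is the only step that genuinely uses the full pseudo MV-structure and not merely the pseudo effect algebra reduct.

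Next, Theorem~\ref{th:2.1} yields a unique (up to isomorphism of unital po-groups) unital po-group $(G,u)$ with $\RDP_1$ together with an isomorphism $M\cong\Gamma(G,u)$. It remains to prove that this $G$ is an $\ell$-group. Now $G$ is directed because $u$ is a strong unit, so by the cited fact that a directed po-group has $\RDP_2$ iff it is an $\ell$-group, it suffices to lift $\RDP_2$ from the interval $\Gamma(G,u)\cong M$ to the whole positive cone $G^+$. I would do this by writing each element of $G^+$ as a finite sum of elements of $[0,u]$ — possible since $u$ is a strong unit — inducting on the number of summands, and refining a decomposition $a_1+a_2=b_1+b_2$ by applying the interval $\RDP_2$ inside suitably translated copies of $[0,u]$; the meet-zero condition survives because the meet computed in $M$ on $[0,u]$ agrees with the meet in $G$. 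Equivalently, one may extend $\wedge,\vee$ from $M$ to $G$ directly: for $g,h\in G$ translate into some $[0,ku]$ and assemble $g\wedge h$ from meets in $M$ using $\RDP_1$-interpolation.

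Uniqueness of $(G,u)$ is inherited from Theorem~\ref{th:2.1}, and since the order determines $\wedge$ and $\vee$, the representing object is unique up to isomorphism of unital $\ell$-groups. For the categorical equivalence I would restrict the equivalence of Theorem~\ref{th:2.1}: the functor $\Gamma$ sends a unital $\ell$-group to a pseudo MV-algebra (the construction recalled before the statement), and the representation just proved shows it is essentially surjective onto pseudo MV-algebras. The full subcategory of unital $\ell$-groups corresponds under $\Gamma$ to the full subcategory of those pseudo effect algebras with $\RDP_1$ that are pseudo MV-algebras, with pseudo effect algebra homomorphisms as morphisms. Finally I would identify these with pseudo MV-homomorphisms: one inclusion is clear, and the other follows since $\oplus$ is recoverable from the reduct, e.g. $a\oplus b=a+(a^\sim\wedge b)$ with $a^\sim\wedge b\le a^\sim$, so that a reduct homomorphism between pseudo MV-algebras preserves $\oplus$, ${}^-$ and ${}^\sim$.

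The step I expect to be the main obstacle is the promotion from $\RDP_2$ on the interval to $\RDP_2$ on all of $G^+$: because $G$ is genuinely non-commutative, the order of addition must be tracked throughout the refinement and meet computations, and the familiar commutative bookkeeping of joint refinements no longer applies verbatim. A self-contained alternative that bypasses Theorem~\ref{th:2.1} is Mundici's good-sequences construction adapted to the non-commutative setting — build $G^+$ as the monoid of good sequences over $M$ under a non-commutative sequence addition, show it is cancellative with lattice-ordered natural order, and realise it as the positive cone of $G$ with strong unit $u=(1)$ — but verifying associativity and the lattice identities for that sequence addition meets exactly the same non-commutative difficulty.
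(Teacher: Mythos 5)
First, a point of comparison: the paper does not prove Theorem~\ref{th:2.2} at all --- it is quoted from \cite{Dvu1} as a known representation theorem, so there is no internal proof to measure your argument against. Your strategy of routing the object-level representation through Theorem~\ref{th:2.1} (check RDP$_2$ for the pseudo effect algebra reduct, represent in a unital po-group with RDP$_1$, then upgrade that po-group to an $\ell$-group) is close in spirit to how the result is established in the literature, and the technical points you flag yourself --- lifting RDP$_2$ from the interval $[0,u]$ to all of $G^+$, and checking that meets computed inside $[0,u]$ are meets in $G$ --- are genuine but manageable with the machinery of \cite{DvVe2}.

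There is, however, a real gap in your treatment of the categorical equivalence. You claim that a homomorphism of the pseudo effect algebra reducts between two pseudo MV-algebras automatically preserves $\oplus$ because $a\oplus b=a+(a^\sim\wedge b)$. Definability of $\oplus$ from the reduct together with its order does not give preservation: a pseudo effect algebra homomorphism is only required to respect the partial operation $+$, and it need not preserve $\wedge$. Already the commutative case gives a counterexample: let $E=\{0,a,a^-,1\}$ be the four-element Boolean algebra and let $h:E\to\Gamma(\mathbb R,1)$ be the state with $h(a)=h(a^-)=1/2$. Then $h$ is an effect algebra homomorphism between (pseudo) MV-algebras, but $h(a\oplus a)=h(a)=1/2$ while $h(a)\oplus h(a)=1$. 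Hence the variety of pseudo MV-algebras (with pseudo MV-homomorphisms) is \emph{not} a full subcategory of the category of pseudo effect algebras with RDP$_1$, and the equivalence asserted in Theorem~\ref{th:2.2} cannot be obtained by merely restricting the equivalence of Theorem~\ref{th:2.1} to a full subcategory. What must be shown instead is that unital $\ell$-group homomorphisms restrict to pseudo MV-homomorphisms of the intervals and, conversely, that every pseudo MV-homomorphism (which, unlike a general reduct homomorphism, does preserve $\wedge$ and $\vee$) extends to an $\ell$-group homomorphism of the representing unital $\ell$-groups; this extra step is the substance of the functorial part of \cite{Dvu1} and is missing from your argument.
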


Now let $M$ be a pseudo MV-algebra. We define a partial operation $+$ on $M$ as follows, $a+b$ is defined iff $a\odot b = 0$; then $a+b:=a\oplus b$. Then $(M;+,0,1)$ is a lattice ordered pseudo effect algebra with RDP$_2$. Conversely, if $(E;+,0,1)$ is a lattice ordered pseudo effect algebra with RDP$_2$,   \cite[Thm 8.8]{DvVe2}, then $(E; \oplus,^-,^\sim,0,1),$ where
$$
a\oplus b := (b^-\minusli (a\wedge b^-))^\sim, \quad a,b \in E, \eqno(2.2)
$$
is a pseudo MV-algebra.

\section{Kite Pseudo Effect Algebras}

In this section, we present kite pseudo effect algebras. The main goal is to show how we can represent a kite pseudo effect algebra with RDP$_1$ as an interval in some unital po-group with RDP$_1$. We show that it can  be represented as an interval in some lexicographic extension of the group. To solve this problem, the main accent will be posed to establish the fact that this lexicographic extension satisfies RDP$_1$. We present even two different proofs of this fact.

For our aims,  we will use po-groups written in a multiplicative way.  Thus let $G=(G;\cdot,^{-1},e,\le)$ be a multiplicatively-written  po-group with an inverse $^{-1},$ identity element $e,$ and equipped with a partial order $\le$. Then $G^+:=\{g \in G\colon g\ge e\}$ and $G^-:=\{g\in G \colon g \le e\}$ are the corresponding positive and negative cones of $G$.

Let $I$ be a set. Define an algebra whose
universe is the set $(G^+)^I \uplus (G^-)^I,$ where $\uplus$ denotes a union of disjoint sets (concerning the element $e$ which is in both $G^+$ and $G^-$, see the comment below on convention).
We order its universe by keeping the original co-ordinatewise ordering
within $(G^+)^I$ and $(G^-)^I$, and setting $x\leq y$ for all
$x\in(G^+)^I$, $y\in(G^-)^I$. Then $\le$ is a partial order on $(G^+)^I \uplus (G^-)^I.$ Hence, the element $e^I:=\langle e: i \in I\rangle$ appears twice: at the bottom of $(G^+)^I$ and at the top of $(G^-)^I$. To
avoid confusion in the definitions below, we adopt a convention of writing
$a_i^{-1},b_i^{-1}, \dots$ for co-ordinates of elements of $(G^-)^I$ and
$f_i,g_i,\dots$ for co-ordinates of elements of $(G^+)^I$. In particular, we
will write $e^{-1}$ for $e$ as an element of $G^-,$  $e$ as an element of $G^+,$ and without loss of generality, we will assume that formally $e^{-1}\ne e.$ We also put $1$ for the
constant sequence $(e^{-1})^I:=\langle e^{-1}\colon i \in
I\rangle$ and $0$ for the constant sequence $e^I:=\langle e_i\colon i \in I\rangle$. Then $0$ and $1$ are the least and greatest elements of $(G^+)^I \uplus (G^-)^I.$

The following construction of kite pseudo effect algebras was presented in \cite[Thm 3.4]{DvuK}:

\begin{theorem}\label{th:3.1}
Let $G$ be a po-group and $\lambda,\rho:I\to I$ be bijections. Let us endow the set $K^{\lambda,\rho}_I(G):=(G^+)^I \uplus (G^-)^I$ with $0=e^I,$ $1=(e^{-1})^I$ and with a partial operation $+$ as follows:

$$\langle a_i^{-1}\colon i\in I\rangle + \langle b_i^{-1}\colon i\in I\rangle \eqno(I)$$
is not defined;

$$ \langle a_i^{-1}\colon i\in I\rangle + \langle f_i\colon i\in I\rangle:= \langle a_i^{-1}f_{\rho^{-1}(i)}\colon i\in I\rangle \eqno(II)
$$
whenever  $f_{\rho^{-1}(i)}\le a_i$ for all $i \in I;$

$$ \langle f_i\colon i\in I\rangle+ \langle a_i^{-1}\colon i\in I\rangle  := \langle f_{\lambda^{-1}(i)} a_i^{-1}\colon i\in I\rangle \eqno(III)
$$
whenever  $f_{\lambda^{-1}(i)}\le a_i$ for all $i \in I,$

$$
\langle f_i\colon i\in I\rangle + \langle g_i\colon i\in I\rangle:= \langle f_i g_i\colon i\in I\rangle
\eqno(IV)
$$
for all $\langle f_i\colon i\in I\rangle$ and all $\langle g_i\colon i\in I\rangle.$
Then the partial algebra $(K^{\lambda,\rho}_I(G); +,0,1)$ is a pseudo effect algebra.

For the negations in the kite $K_{I}^{\lambda,\rho}(G),$ we have
\begin{align*}
\langle a_i^{-1}\colon i\in I\rangle^\sim &= \langle a_{\rho(i)}\colon i\in I\rangle\\
\langle a_i^{-1}\colon i\in I\rangle^-&= \langle a_{\lambda(i)}\colon i\in I\rangle\\
\langle f_i\colon i\in I\rangle^\sim &= \langle f^{-1}_{\lambda^{-1}(i)}\colon i\in I\rangle\\
\langle f_i\colon i\in I\rangle^-&= \langle f^{-1}_{\rho^{-1}(i)}\colon i\in I\rangle.
\end{align*}

If $G$ is an $\ell$-group, then $K^{\lambda,\rho}_I(G)$ is a pseudo effect algebra with \RDP$_2,$ that is, a pseudo MV-algebra.
\end{theorem}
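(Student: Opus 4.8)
The plan is to reduce the statement to two facts already recorded in Section~2: that a lattice-ordered pseudo effect algebra with RDP$_2$ is term-equivalent to a pseudo MV-algebra via formula $(2.2)$, and that the positive cone of an $\ell$-group satisfies RDP$_2$ (a directed po-group has RDP$_2$ iff it is an $\ell$-group). Since the main body of Theorem~\ref{th:3.1} already gives that $K:=K^{\lambda,\rho}_I(G)$ is a pseudo effect algebra, it suffices to establish (a) $K$ is a lattice and (b) $K$ satisfies RDP$_2$; the cited equivalence then upgrades $K$ to a pseudo MV-algebra.

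For (a) I would first note that, $G$ being an $\ell$-group, the coordinatewise product $G^I$ is an $\ell$-group, so $(G^+)^I$ and $(G^-)^I$ are each lattices. The order on $K$ is the ordinal sum placing all of $(G^+)^I$ below all of $(G^-)^I$, and an ordinal sum of two lattices is a lattice: meets and joins inside a block are computed coordinatewise, while for $x\in(G^+)^I$ and $y\in(G^-)^I$ one has $x\wedge y=x$ and $x\vee y=y$. Hence $K$ is lattice-ordered, and meets taken inside the bottom block $(G^+)^I$ agree with meets in $K$.

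For (b) I would argue by cases on $d:=a_1+a_2=b_1+b_2$. Inspecting rules $(I)$--$(IV)$, a defined sum lies in $(G^-)^I$ precisely when exactly one summand does, and in $(G^+)^I$ precisely when both do. If $d\in(G^+)^I$, then all four of $a_1,a_2,b_1,b_2$ lie in $(G^+)^I$ and the hypothesis is just the coordinatewise group identity in the positive cone of the $\ell$-group $G^I$; applying RDP$_2$ there and reading the meet in $(G^+)^I$ settles this case. If $d\in(G^-)^I$, then in each row and column of the refinement table at most one entry can be in $(G^-)^I$, and it is negative exactly when the corresponding $a_i$ or $b_j$ is; this forces the sign pattern of the $c_{ij}$. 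In the two \emph{parallel} subcases the forced negative entry is $c_{12}$ or $c_{21}$; I would then set the complementary cross-term equal to $0$, so that $c_{12}\wedge c_{21}=0$ holds trivially, and recover the remaining entries as differences $\minusre$/$\minusli$, which exist because every element of $(G^+)^I$ lies below every element of $(G^-)^I$. In the two \emph{anti-parallel} subcases the negative entry is $c_{11}$ or $c_{22}$, leaving $c_{12},c_{21}\in(G^+)^I$; here I would choose the negative entry coordinatewise from the joins $a_i\vee b_i$ and solve the rule $(II)$/$(III)$ equations for the others.

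The hard part will be this last, genuinely non-commutative, anti-parallel subcase, where I must simultaneously guarantee that the prescribed refinement is well defined (the definedness inequalities of rules $(II)$/$(III)$ and the positivity of the entries, the latter following from $f_i\ge a_{\rho(i)}b_{\rho(i)}^{-1}$, which is forced by positivity of the dual coordinate) and that $c_{12}\wedge c_{21}=0$. Writing the relevant coordinates as $x=a_{\rho(i)}$, $y=b_{\rho(i)}$ and taking the negative entry from $h=x\vee y$, the meet-zero condition reduces to $h(x^{-1}\wedge y^{-1})=e$, while consistency of the two equations sharing $c_{22}$ reduces to the identity $y(x^{-1}\wedge y^{-1})=yx^{-1}\wedge e=x(x^{-1}\wedge y^{-1})yx^{-1}$. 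Both hold in \emph{every} $\ell$-group purely because left and right translations distribute over $\wedge$, with no commutativity assumed; this is exactly the point that makes the construction work for non-Abelian $G$. The mirror subcase is handled identically with $\lambda$ in place of $\rho$ and left/right interchanged.
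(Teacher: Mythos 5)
First, a point of reference: the paper itself gives no proof of Theorem~\ref{th:3.1} --- it is imported verbatim from \cite[Thm 3.4]{DvuK} --- so there is no internal argument to measure yours against. Judged on its own terms, your proposal has a genuine gap with respect to the statement as written: you prove only the final clause (that $K^{\lambda,\rho}_I(G)$ has RDP$_2$ and hence is a pseudo MV-algebra when $G$ is an $\ell$-group), while explicitly presupposing the main body of the theorem, namely that $(K^{\lambda,\rho}_I(G);+,0,1)$ satisfies the pseudo effect algebra axioms, together with the four negation formulas. Those are not free: the bulk of the work in establishing this theorem is the verification of axiom (i) (associativity across the rules $(II)$--$(IV)$, where one must check that the $\lambda^{-1}$ and $\rho^{-1}$ re-indexings compose consistently, e.g.\ that $(\langle f_i\rangle+\langle a_i^{-1}\rangle)+\langle g_i\rangle$ and $\langle f_i\rangle+(\langle a_i^{-1}\rangle+\langle g_i\rangle)$ exist simultaneously and agree) and of axiom (ii), which is exactly what produces the stated formulas for $^-$ and $^\sim$. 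None of this appears in your write-up, so as a proof of the theorem it is incomplete.

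That said, the part you do prove is correct and carefully done. The reduction via $(2.2)$ and \cite[Thm 8.8]{DvVe2} to ``lattice $+$ RDP$_2$'' is legitimate; the ordinal-sum argument for the lattice property is fine; and your sign analysis of the refinement table is right: a defined sum is negative iff exactly one summand is, which forces exactly one $c_{ij}$ into $(G^-)^I$ and identifies which one from the positions of the negative $a$'s and $b$'s. In the subcases where the negative entry is $c_{12}$ or $c_{21}$, setting the opposite cross-term to $0$ and recovering the rest by $\minusre/\minusli$ (justified by associativity and cancellation) does give $c_{12}\wedge c_{21}=0$. In the subcases where $c_{11}$ (resp.\ $c_{22}$) is negative, your choice $h=x\vee y$ yields cross-terms $(x\vee y)x^{-1}$ and $(x\vee y)y^{-1}$ whose meet is $(x\vee y)(x^{-1}\wedge y^{-1})=e$, the positivity of $c_{22}$ follows from $f_j\ge xy^{-1}\vee e=(x\vee y)y^{-1}$ exactly as you say, and the consistency identity $y(x^{-1}\wedge y^{-1})=x(x^{-1}\wedge y^{-1})yx^{-1}$ $(=yx^{-1}\wedge e)$ does hold in every $\ell$-group by distributivity of translations over $\wedge$. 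So the RDP$_2$ claim is secured; to make this a proof of Theorem~\ref{th:3.1} you would still need to supply the axiom verification and the negation formulas (or cite \cite{DvuK} for them, as the paper does).
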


We note that if $\lambda$ and $\rho$ are identities on $I$, then the corresponding kite pseudo effect algebra $K^{\lambda,\rho}_I(G)$ is isomorphic to $\Gamma(\mathbb Z \lex G^I,(1,e^I))$. According to \cite[Thm 4.1]{DvuK}, the kite pseudo effect algebra satisfies RDP, or RDP$_1$, or RDP$_2$, respectively, iff $G$ satisfies RDP, or RDP$_1$, or RDP$_2$, respectively.

We note that if $O$ is a one-element po-group, then for any index set $I$, and bijections $\lambda,\rho$, the kite pseudo effect algebra $K^{\lambda,\rho}_I(O)$ is a two-element Boolean algebra.

The examples of kite pseudo effect algebras are presented in \cite{DvuK}. Now we present a new example which will be important for our aims. According to \cite[Ex 1.3.25]{Gla}, we present an example which is connected with a lexicographic extension of $G$.

Thus let $G$ be a po-group and $I$ be an index set. Take $\mathbb Z$ the group of integers and let $\phi:I\to I$ be a bijection. Let $H=\mathbb Z \lsemiphi G$ be the semidirect product of $G$ by $\mathbb Z$ defined as the set of elements $(n,\langle x_i: i \in I\rangle)\in \mathbb Z \times G^I$, where $n \in \mathbb Z$ and $\langle x_i:
i\in I\rangle \in G^I$ with the lexicographic ordering, that is, $(n,\langle x_i: i \in I\rangle)\le (m,\langle y_i: i \in I\rangle)$ iff $n<m$ or $n=m$ and $x_i\le y_i$ for every $i \in I$, and with the multiplication $*_\phi$ defined as follows
$$
(n,\langle x_i: i \in I\rangle)*_\phi (m,\langle y_i: i \in I\rangle)=
(n+m, \langle x_iy_{\phi^n(i)}: i\in I\rangle)\eqno(3.1)
$$
for all $(n,\langle x_i: i \in I\rangle), (m,\langle y_i: i \in I\rangle)\in \mathbb Z \lsemiphi G$. It is easy to verify, see \cite[Ex 1.3.25]{Gla}, that $\mathbb Z \lsemiphi G$ is a po-group with the neutral element $(0,e^I)$ and the inversion of the element $(n,\langle x_i: i \in I\rangle)$ is the element
$$ ((n,\langle x_i: i \in I\rangle))^{-1}=(-n, \langle x^{-1}_{\phi^{-n}(i)}: i \in I\rangle).
$$
Then the element $u=(0,e^I)$ is a strong unit of the po-group $\mathbb Z\lexphi G$. Hence,
$$
E_I^\phi(G):=\Gamma(\mathbb Z \lsemiphi G, u) \eqno(3.2)
$$
is a pseudo effect algebra. The kite pseudo effect algebra $K^{\lambda,\rho}_I(G)$, where $\lambda = Id_I$ is the identity on $I$, is isomorphic to the pseudo effect algebra $\Gamma(\mathbb Z \lsemiphi G,u)$, where $\phi = \rho^{-1}$.

In what follows, we show that every kite pseudo effect algebra $K^{\lambda,\rho}_I(G)$ is isomorphic to $E_\phi^I(G)$ for some bijection $\phi:I\to I$.

\begin{theorem}\label{th:3.2}
Every kite pseudo effect algebra  $K^{\lambda,\rho}_I(G)$   is isomorphic to the pseudo effect algebra $E_\phi^I(G)$ for some bijection $\phi: I \to I$.
\end{theorem}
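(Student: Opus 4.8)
The plan is to write down an explicit map and then read off the correct bijection $\phi$ by matching the two mixed partial additions. I would set $\phi := \rho^{-1}\lambda$ (so that $\phi(i)=\rho^{-1}(\lambda(i))$) and define $\psi\colon K^{\lambda,\rho}_I(G)\to E^I_\phi(G)$ by sending the bottom layer into the $\mathbb Z$-level $0$ with no reindexing and the top layer into the $\mathbb Z$-level $1$ with its coordinates permuted by $\lambda$:
$$
\psi(\langle f_i\colon i\in I\rangle)=(0,\langle f_i\colon i\in I\rangle),\qquad
\psi(\langle a_i^{-1}\colon i\in I\rangle)=(1,\langle a_{\lambda(i)}^{-1}\colon i\in I\rangle).
$$
The first thing to record is that the interval $E^I_\phi(G)=\Gamma(\mathbb Z\lsemiphi G,u)$ consists exactly of the pairs $(0,\langle x_i\rangle)$ with all $x_i\in G^+$ together with the pairs $(1,\langle x_i\rangle)$ with all $x_i\in G^-$. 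Since $\lambda$ is a bijection of $I$, $\psi$ is therefore a bijection of $(G^+)^I\uplus(G^-)^I$ onto this interval, and it sends $0=e^I$ to $(0,e^I)$ and $1=(e^{-1})^I$ to $(1,(e^{-1})^I)$, i.e.\ it preserves the constants.

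The core of the argument is to verify that $\psi$ carries the kite addition $(I)$--$(IV)$ to the restriction of the group multiplication $*_\phi$, matching both the domain of definition and the value. Cases $(I)$ and $(IV)$ are immediate: two top elements multiply into the $\mathbb Z$-level $2\notin\{0,1\}$, so the sum is undefined on both sides, while two bottom elements multiply coordinatewise in level $0$ with the trivial twist $\phi^0=\mathrm{id}$. The informative cases are $(II)$ and $(III)$. In $(II)$ one computes $(1,\langle a_{\lambda(i)}^{-1}\rangle)*_\phi(0,\langle f_i\rangle)=(1,\langle a_{\lambda(i)}^{-1}f_{\phi(i)}\rangle)$, where the acting twist is $\phi^1=\phi$; with $\phi=\rho^{-1}\lambda$ the subscript $\phi(i)=\rho^{-1}(\lambda(i))$ makes this equal to $\psi$ applied to $\langle a_i^{-1}\rangle+\langle f_i\rangle=\langle a_i^{-1}f_{\rho^{-1}(i)}\rangle$, and the interval condition $f_{\phi(i)}\le a_{\lambda(i)}$ becomes the kite condition $f_{\rho^{-1}(j)}\le a_j$ after the change of index $j=\lambda(i)$. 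In $(III)$ the acting twist is $\phi^0=\mathrm{id}$, and the $\lambda$-reindexing of the top layer alone cancels the kite's $\lambda^{-1}$-shift, since $f_{\lambda^{-1}(\lambda(i))}=f_i$.

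Because each of the four checks is an equivalence (membership in the interval holds if and only if the corresponding kite sum is defined), the inverse map $\psi^{-1}$ is a homomorphism as well, so $\psi$ is an isomorphism of pseudo effect algebras. I expect the only genuine obstacle to be the index bookkeeping: one must keep straight that the semidirect twist $\phi^n$ acts only in the mixed sum $(II)$ (where $n=1$), whereas the reindexing of the top layer by $\lambda$ must simultaneously absorb the kite's $\lambda^{-1}$ in $(III)$ and, in combination with $\phi$, reproduce $\rho^{-1}$ in $(II)$. Reconciling these two constraints is exactly what forces $\phi=\rho^{-1}\lambda$; as a consistency check, setting $\lambda=\mathrm{id}$ returns $\phi=\rho^{-1}$, recovering the special case recorded immediately before the theorem.
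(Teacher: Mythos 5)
Your proof is correct and follows essentially the same route as the paper: an explicit map with $\phi=\rho^{-1}\circ\lambda$ that places the bottom layer at level $0$ and the top layer at level $1$, with a $\lambda$-reindexing absorbing the kite's $\lambda^{-1}$-shift (the paper reindexes the bottom layer by $\lambda^{-1}$ instead of the top layer by $\lambda$, which differs from your map only by an inner permutation of coordinates). Your case-by-case verification of $(I)$--$(IV)$, including the equivalence of the definedness conditions, is exactly the check the paper leaves as ``straightforward.''
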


\begin{proof}
Define a mapping $\Phi: K^{\lambda,\rho}_I(G) \to E_\phi^I(G)$, where $\phi = \rho^{-1}\circ \lambda$, as follows:
$\Phi(\langle a^{-1}_i: i \in I\rangle)= (1,\langle a^{-1}_i: i\in I\rangle)$ and $\Phi(\langle f_i:i \in I\rangle)= (0, \langle f_{\lambda^{-1}(i)}: i \in I\rangle)$. Then it is straightforward to verify that $\Phi$ is an isomorphism of pseudo effect algebras  in question.
\end{proof}

Using Theorem \ref{th:3.2}, we obtain below an answer to an open question posed in \cite{DvuK}: Describe a unital po-group $(H,u)$ with RDP$_1$ such that $K_{I}^{\lambda,\rho}(G)\cong \Gamma(H,u)$ when $G$ satisfies RDP$_1$.

Before that, we prove the following results.

\begin{proposition}\label{pr:3.3}
Let $G$ be a directed po-group.
The unital po-group $\Gamma(\mathbb Z \lsemiphi G,u)$ satisfies $\RIP$\, if and only if $G$ satisfies $\RIP$.
\end{proposition}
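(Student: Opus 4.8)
The plan is to exploit the fact that \RIP\ is a purely order-theoretic property: its statement mentions neither the group operation nor the twisting $\phi$. So I would first record the order structure of $E:=\Gamma(\mathbb Z \lsemiphi G,u)$. By the lexicographic ordering, $E$ is, as a poset, the ordinal sum of $(G^+)^I$ sitting at level $0$ (with the coordinatewise order) below $(G^-)^I$ sitting at level $1$ (again coordinatewise): every element of level $0$ lies below every element of level $1$, while two elements at the same level are compared coordinatewise. This description is completely independent of $\phi$, and that observation is the real key to the whole proposition.

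For necessity, I would view $G^+$ as a single coordinate of the level-$0$ part $(G^+)^I$ (padding the other coordinates by $e$) and argue that interpolants cannot escape that level: if $c\in E$ satisfies $x\le c\le y$ with $x,y$ at level $0$, then the lexicographic order forces $c$ itself to lie at level $0$. Hence \RIP\ of $E$ yields \RIP\ of $(G^+)^I$, and so of $G^+$ by projecting to one coordinate. Passing from $G^+$ to the directed group $G$ is then the standard translation argument: left-translating an arbitrary interpolation datum by the inverse of a common lower bound (which exists because $G$ is directed) carries the datum into $G^+$, and translating the solution back establishes \RIP\ for $G$.

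For sufficiency I assume $G$ has \RIP, whence so do $G^+$ (an interpolant dominating a positive element is positive) and $G^-$ (by the order-reversing map $x\mapsto x^{-1}$ and the self-duality of \RIP). Taking $a_1,a_2\le b_1,b_2$ in $E$, I split according to the levels of the four elements. If some $b_k$ lies at level $0$ then $a_1,a_2,b_k$ all lie at level $0$ and $c:=b_k$ already interpolates, since it dominates $a_1,a_2$ and is dominated by every level-$1$ element; the symmetric remark treats the case where some $a_j$ sits at level $1$. If all four lie at the same level, the datum is coordinatewise an \RIP\ problem in $G^+$ (resp. $G^-$), solved coordinate by coordinate and reassembled. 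The only remaining configuration has both $a_j$ at level $0$ and both $b_k$ at level $1$; there directedness of $G$ supplies a coordinatewise upper bound $c$ of $a_1,a_2$ inside $(G^+)^I$, which then lies below $b_1,b_2$ automatically.

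The essential content, and the only place the hypothesis on $G$ is genuinely used, is the same-level case; every other configuration is handled by the ordinal-sum geometry together with directedness, which also mediates the passage between $G$ and its positive cone. I therefore do not expect a serious obstacle beyond the initial reduction: once one sees that \RIP\ ignores both the group multiplication and the bijection $\phi$, the problem collapses to interpolation in the coordinatewise orders on $(G^+)^I$ and $(G^-)^I$, and the remaining level bookkeeping is routine.
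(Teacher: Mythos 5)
Your overall strategy---read off the order structure of the lexicographic extension, observe that it does not depend on $\phi$, and run a trichotomy on ``levels'' (strictly separated levels handled by directedness of $G$, equal levels handled coordinatewise by \RIP\ of $G$, and the necessity direction by embedding an interpolation problem of $G$ into a single coordinate and noting that interpolants cannot change level)---is exactly the skeleton of the paper's own proof, and the individual steps you describe are sound, including the translation argument mediating between $G$ and $G^+$ and the remark that a same-level interpolant automatically lands in the correct cone.

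There is, however, a gap of scope. Despite the $\Gamma$ in the displayed statement, the object the proposition is about is the unital po-group $(\mathbb Z \lsemiphi G,u)$ itself: this is how the paper's converse direction reads (it interpolates between elements $(n_j,\langle a_i^j\rangle)\le(m_j,\langle b_i^j\rangle)$ with arbitrary integers $n_j,m_j$), and it is how the result is used in the proof of Theorem~\ref{th:3.4}, where \RIP\ of the \emph{po-group} is fed into \cite[Thm 3.6]{DvKr} to upgrade \RDP$_1$ from the interval to the group. Your sufficiency argument treats only the two-level ordinal sum $(G^+)^I\uplus(G^-)^I$, i.e.\ the interval $[0,u]$; the group has one level for each integer, and the intermediate levels are full copies of $G^I$ rather than cones. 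Since \RIP\ of an interval does not formally transfer to \RIP\ of the ambient group, the proof as written does not establish the statement in the form the paper needs. The repair is routine and uses nothing beyond what you already have: after normalizing $n_1\le n_2\le m_1\le m_2$, the case $n_2<m_1$ is settled by a coordinatewise upper bound of the $a^j_i$ (directedness), the case $n_1=n_2=m_1=m_2$ by coordinatewise \RIP\ in $G$ (now with no positivity constraints), and the boundary case $n_2=m_1$ by taking $(n_2,\langle a^2_i\rangle)$ or a coordinatewise interpolant, exactly as in your level-$0$/level-$1$ analysis.
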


\begin{proof}
Let $\Gamma(\mathbb Z \lsemiphi G,u)$ satisfy RIP, and let $a_1,a_2 \le b_1,b_2$.  Fix $i_0 \in I$ and let $a^j_i = e=b^j_i$ if $i\ne i_0$ otherwise $a^j_{i_0}=a_j$ and $b^{j}_{i_0}= b_j$ for $j=1,2$. Then $(0,\langle a^j_i: i \in I\rangle)\le (0,\langle b^j_i: i \in I\rangle)$, so that, for every $i \in I$, there is $c_i\in G$ such that $(0,\langle a^j_i: i \in I\rangle)\le (0,\langle c_i: i \in I\rangle)\le (0,\langle b^j_i: i \in I\rangle)$. In particular, $a_1,a_2 \le c_{i_0}\le b_1,b_2$ as stated.

Conversely, let $G$ satisfy RIP and let $(n_j,\langle a^j_i: i \in I\rangle)\le (m_j,\langle b^j_i: i \in I\rangle)$ for $j=1,2$. We can assume that $n_1\le n_2\le m_1\le m_2$.

(i) Let $n_2 < m_1$. Directness of $G$ implies that, for every $i \in I$, there is $c_i \in G$ such that $a^1_i,a^2_i \le c_i$. Then $(n_j,\langle a^j_i: i \in I\rangle)\le (n_2,\langle c_i: i \in I\rangle)\le (m_j,\langle b^j_i: i \in I\rangle)$ for $j=1,2$.

(ii) Let $n_2=m_1$. Then $a^2_i\le b_i^1$ for each $i \in I$. If $n_1 <n_2$, then $(n_j,\langle a^j_i: i \in I\rangle)\le (n_2,\langle a^2_i: i \in I\rangle)\le (m_j,\langle b^j_i: i \in I\rangle)$ for $j=1,2$. Now assume $n_1=n_2$. Then $a^1_i\le b^2_i$ for every $i \in I$. If $m_1 < m_2$, then $(n_j,\langle a^j_i: i \in I\rangle)\le (n_2,\langle c_i: i \in I\rangle)\le (m_1,\langle a^1_i: i \in I\rangle)\le (m_j,\langle b^j_i: i \in I\rangle)$ for $j=1,2$. Finally, we assume $n_1=n_2=m_1=m_2$. Then $a^1_i,a^2_i\le b^1_i,b^2_i$ for each $i \in I$, so that there is $c_i\in G$ such that $a^1_i,a^2_i\le c_i\le  b^1_i,b^2_i$. Hence, $(n_j,\langle a^j_i: i \in I\rangle)\le (n_2,\langle c_i: i \in I\rangle)\le  (m_j,\langle b^j_i: i \in I\rangle)$ for $j=1,2$.
\end{proof}

We note that if, for the equation $a_1+a_2=b_1+b_2$ in a pseudo effect algebra $E$, the following table

$$
\begin{matrix}
a_1  &\vline & c_{11} & c_{12}\\
a_{2} &\vline & c_{21} & c_{22}\\
  \hline     &\vline      &b_{1} & b_{2}
\end{matrix}\ \
$$
denotes some RDP type decomposition, then, for the equivalent identity $b_1+b_2=a_1+a_2$, the next table
$$
\begin{matrix}
b_1  &\vline & c_{11} & c_{21}\\
b_{2} &\vline & c_{12} & c_{22}\\
  \hline     &\vline      &a_{1} & a_{2}
\end{matrix}\ \
$$
gives the corresponding RDP type decomposition.

\begin{theorem}\label{th:3.4}
Let $G$ be a directed po-group and $\phi:I \to I$ be a bijection. Then the unital po-group $(\mathbb Z \lsemiphi G, u)$ satisfies \RDP\, (\RDP$_1$, \RDP$_2$, respectively) if and only if $G$ satisfies \RDP\, (\RDP$_1$, \RDP$_2$, respectively).
\end{theorem}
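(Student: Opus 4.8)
The plan is to prove the equivalence for each of the three decomposition properties by reducing a decomposition equation in $\mathbb Z \lsemiphi G$ to one or several decomposition equations in $G$. Throughout I will use the multiplication rule (3.1) and the fact that $u=(0,e^I)$ is the strong unit, so that the positive cone $(\mathbb Z\lsemiphi G)^+$ consists of all $(n,\langle x_i\rangle)$ with $n>0$, together with those $(0,\langle x_i\rangle)$ for which $x_i\ge e$ for all $i$. The backward direction ($G$ satisfies the property $\Rightarrow$ $\mathbb Z\lsemiphi G$ does) is the substantive one; the forward direction is obtained by embedding $G^I$ into the first lexicographic level and restricting, just as in the proof of Proposition \ref{pr:3.3}.

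First I would set up the core computation. Suppose $(a_1)(a_2)=(b_1)(b_2)$ in $(\mathbb Z\lsemiphi G)^+$ with $(a_j)=(n_j,\langle a^j_i\rangle)$ and $(b_j)=(m_j,\langle b^j_i\rangle)$. Comparing first coordinates forces $n_1+n_2=m_1+m_2$, and the key observation is that because of the lexicographic order, the ``interesting'' behaviour happens only in the top level. I would organize the argument by cases on the integer parts. When the integers are strictly positive, directedness of $G$ lets one split the $G$-coordinates essentially freely, since any element of $G$ is dominated by a positive one and the order constraint in the product is governed by the $\mathbb Z$-component; here the decomposition matrix is built by choosing integer entries $c_{kl}^{\mathbb Z}\ge 0$ summing correctly and filling the $G$-coordinates using directedness. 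The delicate case is the ``flat'' one, where some of the integer components vanish so that the $G$-coordinates are forced to be positive and carry the whole content of the equation: there the equation (3.1) reduces, coordinatewise, to an honest equation $a^1_i\, a^2_{\phi^{n_1}(i)} = b^1_i\, b^2_{\phi^{m_1}(i)}$ in $G^+$, and I apply \RDP\ (resp.\ \RDP$_1$, \RDP$_2$) of $G$ coordinatewise to obtain the four refining elements $c^{kl}_i\in G^+$.

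The main obstacle will be the \emph{twist} $\phi$: the multiplication shifts the indices of the second factor by $\phi^{n}$, so the coordinatewise equations in $G$ do not line up index-by-index but along $\phi$-orbits. I must therefore reindex carefully, applying $\phi$-powers consistently so that the four families $\langle c^{kl}_i\rangle$ reassemble into genuine elements of $\mathbb Z\lsemiphi G$ satisfying $(a_1)=(c_{11})*_\phi(c_{12})$, $(a_2)=(c_{21})*_\phi(c_{22})$, $(b_1)=(c_{11})*_\phi(c_{21})$, $(b_2)=(c_{12})*_\phi(c_{22})$ \emph{with the correct shifts}. Verifying that the shift bookkeeping is consistent in all four product identities simultaneously is the technical heart of the proof. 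For the refinements to the stronger properties I would then check the extra clause on the preserved structure: for \RDP$_2$ the meet $c_{12}\wedge c_{21}=(0,e^I)$ follows because the wedge in $\mathbb Z\lsemiphi G$ is computed lexicographically and reduces to the coordinatewise meets $c^{12}_i\wedge c^{21}_i=e$ supplied by $G$; for \RDP$_1$ the commutativity clause ``$a\,\com\, b$'' transfers because an element below $(0,\langle c^{12}_i\rangle)$ or $(0,\langle c^{21}_i\rangle)$ sits in the top level with $\mathbb Z$-component $0$, so the product $*_\phi$ degenerates (no shift, since $n=0$) to the coordinatewise product in $G^I$, whence commutativity of the relevant elements of $G$ given by \RDP$_1$ yields commutativity in $\mathbb Z\lsemiphi G$.

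Finally, for the forward implications I would invoke the standard reduction: fix $i_0\in I$, embed a single \RDP\ equation of $G$ into the top level (integer part $0$) supported at $i_0$, run the property in $\mathbb Z\lsemiphi G$, and read off the four refining elements from the $i_0$-coordinate; the lexicographic positivity guarantees these all land in $G^+$, and the $\phi$-shift acts trivially because the integer parts are $0$. This recovers \RDP, \RDP$_1$, or \RDP$_2$ for $G$, completing the equivalence.
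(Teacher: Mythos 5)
Your overall strategy---reducing the refinement equation in $\mathbb Z\lsemiphi G$ to coordinatewise refinement equations in $G$ by cases on the integer parts, with $\phi$-reindexing---is the route the paper takes in its second, direct proof (Theorem \ref{th:3.5}); the proof the paper actually attaches to Theorem \ref{th:3.4} is shorter, verifying RDP$_1$ only for the four level-combinations occurring inside the interval $\Gamma(\mathbb Z\lsemiphi G,u)$ and then lifting to the whole po-group via RIP (Proposition \ref{pr:3.3}) and a cited transfer theorem from [DvKr]. Your forward direction matches the paper's. The plan for the backward direction is viable, but your case dichotomy contains a step that would fail.

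The claim that ``when the integers are strictly positive, directedness lets one split the $G$-coordinates essentially freely'' breaks down precisely in the diagonal case $n_1=m_1\ge 1$, $n_2=m_2\ge 1$ (case (ix) of the paper's direct proof). The integer parts of the refinement matrix must satisfy $\mathrm{lev}(c_{12})=\mathrm{lev}(c_{21})$; if you set both to $0$, those entries are forced into $(G^+)^I$ and you are back to needing RDP of $G$, while if you set both $\ge 1$ so that their $G$-parts are unconstrained, three of the four product identities determine all entries and the fourth becomes a genuine constraint that fails for non-commutative $G$. So this case requires exactly the same treatment as your ``flat'' case: a directedness shift $d_i\le x_i,y_i,u_i,v_i$ followed by an application of RDP (RDP$_1$) of $G$ to $d_i^{-1}x_i\, y_{\phi^{n_1}(i)}d^{-1}_{\phi^{n_1}(i)}=d_i^{-1}u_i\, v_{\phi^{n_1}(i)}d^{-1}_{\phi^{n_1}(i)}$. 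Relatedly, in your flat case the coordinatewise equation is not automatically ``in $G^+$'': in $(0,\langle x_i\rangle)*_\phi(n,\langle y_i\rangle)=(0,\langle u_i\rangle)*_\phi(n,\langle v_i\rangle)$ with $n\ge1$ the $y_i,v_i$ are arbitrary elements of $G$, and the same directedness shift is needed before RDP of $G$ applies. Finally, your transfer of the RDP$_1$ commutativity clause presumes both off-diagonal entries lie at level $0$; in the cases where one of them must carry a positive integer part (e.g.\ $n_1\ne m_1$), the fix is to choose the opposite off-diagonal entry to be $(0,e^I)$, so that the clause holds vacuously---this is how the paper closes those cases.
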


\begin{proof}
The pseudo effect algebra $\Gamma(\mathbb Z \lsemiphi G, u)$ consists of two parts: $\{(0,\langle x_i: i \in I\rangle): x_i \ge e$ for each $i\in I\}\cup \{(0,\langle y_i: i \in I\rangle): y_i \le e$ for each $i\in I\}$.

Assume that $(\mathbb Z \lsemiphi G, u)$ satisfies \RDP\, (\RDP$_1$, \RDP$_2$, respectively). Let $a_1a_1=b_1b_2$ holds for some $a_1,a_2,b_1,b_2 \in G^+$. Fix an element $i_0\in I$ and for $j=1,2,$ let us define $A_j=(0,\langle f_i^j \colon i\in I\rangle)$ by $f_i^j=a_j$ if $i=i_0$ and $f_i^j=e$ otherwise, $B_j= (0,\langle g_i^j \colon i\in I\rangle)$ by $g_i^j=b_j$ if $i=i_0$ and $g_i^j=e$ otherwise. Then $A_1 *_\phi A_2=B_1*_\phi B_2$ so that there are $E_{11}=(0,\langle e^{11}_i \colon i\in I\rangle),$ $E_{12}=(0,\langle e^{12}_i \colon i\in I\rangle),$ $E_{21}=(0,\langle e^{21}_i \colon i\in I\rangle),$ and $E_{22}=(0,\langle e^{22}_i \colon i\in I\rangle),$ such that $A_1=E_{11}*_\phi E_{12},$ $A_2=E_{21}*_\phi E_{22},$ $B_1=E_{11}*_\phi E_{21},$ and $B_2=E_{12}*_\phi E_{22}.$ Hence, we see that $G$ satisfies RDP. In the same manner we deal with the other kinds of RDPs.

Conversely, let $G$ satisfy RDP$_1$. (The case when $G$ satisfies RDP deals in a similar way). To be more compact, we will write
$(n,\langle x_i\rangle)$ instead of $(n,\langle x_i: i \in I\rangle)$.

First, we show that the pseudo effect algebra $\Gamma(\mathbb Z \lsemiphi G, u)$ satisfies RDP$_1$.

We have the following cases.

(i) $(0, \langle  x_i \rangle) *_\phi (0,\langle  y_i \rangle)=
(0,\langle  u_i \rangle) *_\phi (0,\langle  v_i \rangle)$ for $x_i,y_i,u_i,v_i \ge 0$ for each $i \in I$. Then $x_iy_i=u_iv_i$ so that for them the RDP$_1$-decomposition $e_i^{11},e_i^{12},e_i^{21}, e_i^{22}$ in $G$ can be found, so that $(0,\langle  e_i^{11} \rangle), (0,\langle  e_i^{12} \rangle), (0,\langle  e_i^{21} \rangle), (0,\langle  e_i^{22} \rangle)$ is the corresponding RDP$_1$-decomposition of (i) in $\Gamma(\mathbb Z \lsemiphi G,u)$.

(ii) $(0, \langle  x_i \rangle) *_\phi (1,\langle  y_i \rangle)=
(0,\langle  u_i \rangle) *_\phi (1,\langle  v_i \rangle)$ for $x_i,u_i \ge 0$, $y_i,v_i \le e$ for each $i \in I$. Then $x_iy_i=u_iv_i$ for each $i \in I$. Since $G$ is directed, for any $i\in I$, there is an element $d_i \in G$ such that $y_i,v_i \ge d_i$. Then $x_iy_id_i^{-1}=u_iv_id_i^{-1}$ and for them we have the RDP$_1$ decomposition

$$
\begin{matrix}
x_i  &\vline & c_i^{11} & c_i^{12}\\
y_id_i^{-1} &\vline & c_i^{21} & c_i^{22}\\
  \hline     &\vline      &u_i & v_id_i^{-1}
\end{matrix}\ \ ,
$$
where $e_i^{12}\, \mbox{\rm \bf com}\, e_i^{21}$. Then

$$
\begin{matrix}
x_i  &\vline & c_i^{11} & c_i^{12}\\
y_i &\vline & c_i^{21} & c_i^{22}d_i\\
  \hline     &\vline      &u_i & v_i
\end{matrix}\ \
$$
and
$$
\begin{matrix}
(0,\langle x_i\rangle)  &\vline & (0,\langle c_i^{11}\rangle) & (0,\langle c_i^{12}\rangle)\\
(1,\langle y_i\rangle) &\vline & (0,\langle c_i^{21}\rangle) & (1,\langle c_i^{22}d_i\rangle)\\
  \hline     &\vline      &(0,\langle u_i\rangle) & (1,\langle v_i \rangle)
\end{matrix}\ \
$$
is an RDP$_1$ decomposition for (ii) in the pseudo effect algebra  $\Gamma(\mathbb Z \lsemiphi G,u)$.

(iii) $(1, \langle  x_i \rangle) *_\phi (0,\langle  y_i \rangle)=
(1,\langle  u_i \rangle) *_\phi (0,\langle  v_i \rangle)$ for $y_i,v_i \ge 0$, $x_i,u_i \le e$ for each $i \in I$. Then directness of $G$ implies that, for every $i \in I$, there is an element $d_i \in G$ such that $x_i,y_i,u_i,v_i \ge d_i$. Equality (iii) can be rewritten in the equivalent form $(1, \langle  d_i^{-1}x_i \rangle) *_\phi (0,\langle  y_id_i^{-1} \rangle)=
(1,\langle  d_i^{-1}u_i \rangle) *_\phi (0,\langle  v_id_i^{-1} \rangle)$ which yields
$d_i^{-1}x_i y_{\phi(i)}d_{\phi(i)}^{-1}=d_i^{-1}u_iv_{\phi(i)}d^{-1}_{\phi(i)}$. It has an RDP$_1$ decomposition in the pseudo effect algebra  $\Gamma(\mathbb Z \lsemiphi G,u)$

$$
\begin{matrix}
d_i^{-1}x_i  &\vline & c_i^{11} & c_i^{12}\\
y_{\phi (i)}d^{-1}_{\phi(i)} &\vline & c_i^{21} & c_i^{22}\\
  \hline     &\vline      &d_i^{-1}u_i & v_{\phi(i)}
d^{-1}_{\phi(i)}\end{matrix}\ \ ,
$$
consequently,
$$
\begin{matrix}
x_i  &\vline & d_ic_i^{11} & c_i^{12}\\
y_{\phi (i)} &\vline & c_i^{21} & c_i^{22}d_{\phi(i)}\\
  \hline     &\vline      &u_i & v_{\phi(i)}
\end{matrix}\ \ ,
$$
and it gives an RDP$_1$ decomposition of (iii) in the unital po-group $(\mathbb Z \lsemiphi G,u)$

$$
\begin{matrix}
(1,\langle x_i\rangle)  &\vline & (1,\langle d_ic_i^{11}\rangle) & (0,\langle c_{\phi^{-1}(i)}^{12}\rangle)\\
(0,\langle y_{i}\rangle) &\vline & (0,\langle c_{\phi^{-1}(i)}^{21}\rangle) & (0,\langle c_{\phi^{-1}(i)}^{22}d_i\rangle)\\
  \hline     &\vline      &(1,\langle u_i\rangle) & (0,\langle v_i \rangle)
\end{matrix}\ \ .
$$

(iv) $(1, \langle  x_i \rangle) *_\phi (0,\langle  y_i \rangle)=
(0,\langle  u_i \rangle) *_\phi (1,\langle  v_i \rangle)$ for $x_i,v_i\in G $, $y_i,u_i \ge e$ for each $i \in I$.

Then $x_iy_{\phi(i)} = u_iv_{i}$ for each $i \in I$, which implies $u_i^{-1}x_i= v_i y^{-1}_{\phi(i)}$.  If we use the decomposition

$$
\begin{matrix}
(1,\langle x_i\rangle)  &\vline & (0,\langle u_i\rangle) & (1,\langle u_{i}^{-1}x_i\rangle)\\
(0,\langle y_{i}\rangle) &\vline & (0,e^I) & (0,\langle y_i\rangle)\\
  \hline     &\vline      &(0,\langle u_i\rangle) & (1,\langle v_{i} \rangle)
\end{matrix}\ \ ,
$$
we see that it gives an RDP$_1$ decomposition for (iv); trivially  $(0,e^I)\, \mbox{\rm \bf com}\, (1,\langle u_{i}^{-1}x_i\rangle)$.

Summing up cases (i)--(iv), we see that the pseudo effect algebra $\Gamma(\mathbb Z \lsemiphi G,u)$ satisfies RDP$_1$.

Since $G$ satisfies RDP$_1$, it satisfies also RIP, see \cite[Prop 4.2]{DvVe1}. By Proposition \ref{pr:3.3}, the unital po-group $(\mathbb Z \lsemiphi G,u)$ satisfies RIP, too. Applying \cite[Thm 3.6]{DvKr}, we have that the po-group $(\mathbb Z \lsemiphi G,u)$ satisfies RDP$_1$ as it was claimed.

Finally, let us assume that $G$ satisfies RDP$_2$. By \cite[Prop 4.2(ii)]{DvVe1}, a directed po-group $G$ satisfies \RDP$_2$ iff $G$ is an $\ell$-group. It is easy to verify that if $G$ is an $\ell$-group, so is $\mathbb Z \lsemiphi G$, hence, $\mathbb Z \lsemiphi G$ satisfies RDP$_2$
\end{proof}

In the following we present a direct proof of the Theorem \ref{th:3.4}.

\begin{theorem}\label{th:3.5}
Let $G$ be a directed po-group and $\phi:I \to I$ be a bijection. Then the unital po-group $(\mathbb Z \lsemiphi G, u)$ satisfies \RDP\, (\RDP$_1$, \RDP$_2$, respectively) if and only if $G$ satisfies \RDP\, (\RDP$_1$, \RDP$_2$, respectively).
\end{theorem}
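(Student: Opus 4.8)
The plan is to verify the three decomposition properties \emph{directly} on the positive cone of $\mathbb Z \lsemiphi G$, thereby bypassing the detour through $\Gamma(\mathbb Z \lsemiphi G,u)$, Proposition \ref{pr:3.3}, and the interpolation-lifting result invoked in Theorem \ref{th:3.4}. Recall that the positive cone of $\mathbb Z \lsemiphi G$ consists of the \emph{level-$0$} elements $(0,\langle x_i\rangle)$ with $x_i\ge e$ for all $i$, together with the \emph{higher} elements $(n,\langle x_i\rangle)$ with $n\ge 1$ and $x_i\in G$ arbitrary; in particular the $G$-coordinates of a higher element carry no positivity constraint. The forward implication for \RDP\ and \RDP$_1$ is easy: embedding $G$ via $x\mapsto (0,\langle x_i\rangle)$ with $x_{i_0}=x$ and $x_i=e$ otherwise sends a positive equation $a_1a_2=b_1b_2$ in $G$ to an equation of level-$0$ elements, and since a product of two positive elements is level-$0$ only when both factors are level-$0$, any refinement in $\mathbb Z \lsemiphi G$ consists of level-$0$ elements; reading off the $i_0$-coordinate returns a refinement in $G$, and the \RDP$_1$ commutativity restricts coordinatewise because on level-$0$ elements the order, the multiplication, and the intervals $[(0,e^I),\cdot]$ are all coordinatewise.

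For the converse I would start from a positive equation $A_1 *_\phi A_2 = B_1 *_\phi B_2$, writing $A_j=(n_j,\langle a_i^j\rangle)$ and $B_j=(m_j,\langle b_i^j\rangle)$. Comparing $\mathbb Z$-components gives $n_1+n_2=m_1+m_2$, and comparing $G$-components gives the identities $a_i^1 a^2_{\phi^{n_1}(i)}=b_i^1 b^2_{\phi^{m_1}(i)}$ for all $i$. Using the transpose of the \RDP-table noted before Theorem \ref{th:3.4}, I may assume $n_1\le m_1$. If $n_1<m_1$ the refinement is immediate: put $c_{11}=A_1$, $c_{12}=(0,e^I)$, $c_{21}=A_1^{-1}*_\phi B_1$ and $c_{22}=B_2$. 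All four lie in the cone (the off-diagonal $c_{21}$ is higher, having $\mathbb Z$-component $m_1-n_1\ge 1$), associativity yields the four defining equations, and the \RDP$_1$-condition is vacuous since $c_{12}$ is the neutral element; the symmetric choice handles $n_1>m_1$.

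The substantial case is $n_1=m_1=:p$ (hence $n_2=m_2=:q$), which I would treat block by block. For each $i$ put $j=\phi^{p}(i)$; the only cell-coordinates entering the equations attached to this block are $c^{11}_i,\,c^{12}_j,\,c^{21}_j,\,c^{22}_j$, and because $\phi^{p}$ is a bijection these blocks are pairwise disjoint, so the whole problem decouples into independent instances governed by the single identity $a^1_i a^2_j=b^1_i b^2_j$. In each block I would use directedness of $G$ to pick lower bounds $d\le a^1_i,b^1_i$ and $d'\le a^2_j,b^2_j$ for exactly those sides whose level is $\ge 1$ (so that the adjacent diagonal cell is free), passing to the genuinely positive equation $(d^{-1}a^1_i)(a^2_j (d')^{-1}) = (d^{-1}b^1_i)(b^2_j(d')^{-1})$ in $G^+$. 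Applying \RDP\ (resp. \RDP$_1$) of $G$ yields cells $r_{11},r_{12},r_{21},r_{22}\in G^+$, and I would set $c^{11}_i=d\,r_{11}$, $c^{12}_j=r_{12}$, $c^{21}_j=r_{21}$, $c^{22}_j=r_{22}\,d'$, the shifts being absorbed into the free diagonal cells while the off-diagonal cells stay in $G^+$. For \RDP$_1$ the off-diagonal cells are precisely $r_{12},r_{21}$, so the relation $r_{12}\com r_{21}$ supplied by $G$ yields $c_{12}\com c_{21}$ coordinatewise. Finally, \RDP$_2$ (in both directions) is immediate from \cite[Prop 4.2(ii)]{DvVe1}: a directed $G$ satisfies \RDP$_2$ iff it is an $\ell$-group, and the lexicographic structure makes $\mathbb Z \lsemiphi G$ an $\ell$-group precisely when $G$ is one.

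I expect the main obstacle to be the case $n_1=m_1$, and within it the verification that the shifts by $\phi^{p}$ genuinely decouple the cell-coordinates into independent blocks and that the positivity regime (which diagonal cell is free and which cells must satisfy $\ge e$) is correctly matched to where the directedness shifts $d,d'$ are inserted. Once this bookkeeping is pinned down, each block is settled by a single application of the decomposition property of $G$, and the \RDP$_1$ commutation is inherited at no extra cost.
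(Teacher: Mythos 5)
Your proposal is correct and takes essentially the same route as the paper's own direct proof of this theorem: the two decompositions you use are exactly the ones the paper uses, namely the trivial refinement with a neutral off-diagonal cell (making the \RDP$_1$ commutation condition vacuous) whenever the $\mathbb Z$-components of $A_1$ and $B_1$ differ, and the directedness-shifted application of the corresponding property of $G$ when they agree, with the forward implication and the \RDP$_2$ case ($\ell$-group characterization) handled identically. Your only real departure is organizational: you collapse the paper's nine cases (i)--(ix) into three by sorting on the sign of $n_1-m_1$, and you make explicit the decoupling of the coordinates into disjoint blocks $\{c^{11}_i, c^{12}_{\phi^{p}(i)}, c^{21}_{\phi^{p}(i)}, c^{22}_{\phi^{p}(i)}\}$, which the paper carries out implicitly through the reindexings $c^{12}_{\phi^{-n_1}(i)}$ in its displayed tables --- a genuine tidying of the bookkeeping, but not a different proof.
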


\begin{proof}
The positive cone of the unital po-group $(\mathbb Z \lsemiphi G, u)$ is the set $\{(0,\langle x_i: i \in I\rangle): x_i \ge e$ for each $i\in I\}\cup \bigcup_{n=1}^\infty \{(n, \langle x_i: i \in I\rangle), \ x_i \in G \mbox{ for each } i \in I\}$.

One implication is according to Theorem \ref{th:3.4} evident.

Therefore, we assume that $G$ satisfies RDP$_1$. (The case when $G$ satisfies RDP deals in a similar way). To be more compact, we will write
$(n,\langle x_i\rangle)$ instead of $(n,\langle x_i: i \in I\rangle)$.  We have the following cases.

(i) $(0, \langle  x_i \rangle) *_\phi (0,\langle  y_i \rangle)=
(0,\langle  u_i \rangle) *_\phi (0,\langle  v_i \rangle)$ for $x_i,y_i,u_i,v_i \ge 0$ for each $i \in I$. The proof of this case is identical to the proof of case (i) in Theorem \ref{th:3.4}.

(ii) $(0, \langle  x_i \rangle) *_\phi (n,\langle  y_i \rangle)=
(0,\langle  u_i \rangle) *_\phi (n,\langle  v_i \rangle)$ for $x_i,u_i \ge 0$, $y_i,v_i \in G$ for each $i \in I$, and $n\ge 1$. Then $x_iy_i=u_iv_i$ for each $i \in I$. Since $G$ is directed, for any $i\in I$, there is an element $d_i \in G$ such that $y_i,v_i \ge d_i$. Then $x_iy_id_i^{-1}=u_iv_id_i^{-1}$ and for them we have the RDP$_1$ decomposition

$$
\begin{matrix}
x_i  &\vline & c_i^{11} & c_i^{12}\\
y_id_i^{-1} &\vline & c_i^{21} & c_i^{22}\\
  \hline     &\vline      &u_i & v_id_i^{-1}
\end{matrix}\ \ ,
$$
where $c_i^{12}\, \mbox{\rm \bf com}\, c_i^{21}$. Then

$$
\begin{matrix}
x_i  &\vline & c_i^{11} & c_i^{12}\\
y_i &\vline & c_i^{21} & c_i^{22}d_i\\
  \hline     &\vline      &u_i & v_i
\end{matrix}\ \
$$
and
$$
\begin{matrix}
(0,\langle x_i\rangle)  &\vline & (0,\langle c_i^{11}\rangle) & (0,\langle c_i^{12}\rangle)\\
(n,\langle y_i\rangle) &\vline & (0,\langle c_i^{21}\rangle) & (n,\langle c_i^{22}d_i\rangle)\\
  \hline     &\vline      &(0,\langle u_i\rangle) & (n,\langle v_i \rangle)
\end{matrix}\ \
$$
is an RDP$_1$ decomposition for (ii) in the po-group $\mathbb Z \lsemiphi G$.

(iii) $(n, \langle  x_i \rangle) *_\phi (0,\langle  y_i \rangle)=
(n,\langle  u_i \rangle) *_\phi (0,\langle  v_i \rangle)$ for $y_i,v_i \ge 0$, $x_i,u_i \in G$ for each $i \in I$, and $n \ge 1$. The directness of $G$ implies, for each $i \in I$, there is $d_i \in G$ such that $d_i\le x_i,y_i,u_i,v_i$. Equality (iii) can be rewritten in the equivalent form $(n, \langle  d_i^{-1}x_i \rangle) *_\phi (0,\langle  y_id_i^{-1} \rangle)=
(n,\langle  d_i^{-1}u_i \rangle) *_\phi (0,\langle  v_id_i^{-1} \rangle)$ which yields $d_i^{-1}x_iy_{\phi^n(i)}d^{-1}_{\phi^n(i)}=d_i^{-1}u_iv_{\phi^n(i)}d_{\phi^n(i)}^{-1}$.
It entails an RDP$_1$ decomposition in the po-group $G^I$

$$
\begin{matrix}
d_i^{-1}x_i  &\vline & c_i^{11} & c_i^{12}\\
y_{\phi^n (i)}d^{-1}_{\phi^n(i)} &\vline & c_i^{21} & c_i^{22}\\
  \hline     &\vline      &d_i^{-1}u_i & v_{\phi^n(i)}
d^{-1}_{\phi^n(i)}\end{matrix}\ \ ,
$$
consequently,
$$
\begin{matrix}
x_i  &\vline & d_ic_i^{11} & c_i^{12}\\
y_{\phi^n (i)} &\vline & c_i^{21} & c_i^{22}d_{\phi^n(i)}\\
  \hline     &\vline      &u_i & v_{\phi^n(i)}
\end{matrix}\ \ ,
$$
and it gives an RDP$_1$ decomposition of (iii) in the unital po-group $(\mathbb Z \lsemiphi G,u)$

$$
\begin{matrix}
(n,\langle x_i\rangle)  &\vline & (n,\langle d_ic_i^{11}\rangle) & (0,\langle c_{\phi^{-n}(i)}^{12}\rangle)\\
(0,\langle y_{i}\rangle) &\vline & (0,\langle c_{\phi^{-n}(i)}^{21}\rangle) & (0,\langle c_{\phi^{-n}(i)}^{22}d_i\rangle)\\
  \hline     &\vline      &(n,\langle u_i\rangle) & (0,\langle v_{i} \rangle)
\end{matrix}\ \ .
$$

(iv) $(n, \langle  x_i \rangle) *_\phi (0,\langle  y_i \rangle)=
(0,\langle  u_i \rangle) *_\phi (n,\langle  v_i \rangle)$ for $x_i,v_i\in G $, $y_i,u_i \ge e$ for each $i \in I$, $n \ge 1$.

Then $x_iy_{\phi^n(i)} = u_iv_{i}$ for each $i \in I$, which implies $u_i^{-1}x_i= v_i y^{-1}_{\phi^n(i)}$.   If we use the decomposition

$$
\begin{matrix}
(n,\langle x_i\rangle)  &\vline & (0,\langle u_i\rangle) & (n,\langle u_{i}^{-1}x_i\rangle)\\
(0,\langle y_{i}\rangle) &\vline & (0,e^I) & (0,\langle y_i\rangle)\\
  \hline     &\vline      &(0,\langle u_i\rangle) & (n,\langle v_{i} \rangle)
\end{matrix}\ \ ,
$$
we see that it gives an RDP$_1$ decomposition for (iv); trivially  $(0,e^I)\, \mbox{\rm \bf com}\, (n,\langle u_{i}^{-1}x_i\rangle)$.

(v) $(n, \langle  x_i \rangle) *_\phi (0,\langle  y_i \rangle)=
(m_1,\langle  u_i \rangle) *_\phi (m_2,\langle  v_i \rangle)$ for $x_i,u_i, v_i\in G $, $y_i\ge e$ for each $i \in I$, where $m_1,m_2\ge 1$ and $m_1+m_2=n$. Then $x_iy_{\phi^n(i)}=u_iv_{\phi^{m_1}(i)}$ for each $i\in I$. Hence, the following table gives an RDP$_1$ decomposition for (v)

$$
\begin{matrix}
(n,\langle x_i\rangle)  &\vline & (m_1,\langle u_i\rangle) & (m_2,\langle u^{-1}_{\phi^{-m_1}(i)}x_{\phi^{-m_1}(i)}\rangle)\\
(0,\langle y_{i}\rangle) &\vline & (0,e^I) & (0,\langle y_i\rangle)\\
  \hline     &\vline      &(m_1,\langle u_i\rangle) & (m_2,\langle v_{i} \rangle)
\end{matrix}\ \ .
$$

(vi) $(0, \langle  x_i \rangle) *_\phi (n,\langle  y_i \rangle)=
(m_1,\langle  u_i \rangle) *_\phi (m_2,\langle  v_i \rangle)$ for $y_i,u_i, v_i\in G $, $x_i\ge e$ for each $i \in I$, where $m_1,m_2\ge 1$ and $m_1+m_2=n$. Then (vi) can be rewritten in the form $(m_1,\langle  u_i \rangle) *_\phi (m_2,\langle  v_i \rangle) = (0, \langle  x_i \rangle) *_\phi (n,\langle  y_i \rangle)$ and for it we have $u_iv_{\phi^{m_1}(i)}=x_iy_i$ for each $i \in I$ and  the following RDP$_1$ decomposition

$$
\begin{matrix}
(m_1,\langle u_i\rangle)  &\vline & (0,\langle x_i\rangle) & (m_1,\langle x_i^{-1}u_i\rangle)\\
(m_2,\langle v_{i} \rangle) &\vline & (0,e^I) & (m_2,\langle v_i\rangle)\\
  \hline     &\vline      &(0,\langle x_i\rangle) & (n,\langle y_{i}\rangle)
\end{matrix}\ \ .
$$

(vii) $(n_1, \langle  x_i \rangle) *_\phi (n_2,\langle  y_i \rangle)=
(m_1,\langle  u_i \rangle) *_\phi (m_2,\langle  v_i \rangle)$ for $x_i, y_i,u_i, v_i\in G $, for each $i \in I$, where $n_1, n_2, m_1,m_2 \ge 1$, $n_1+n_2=n=m_1+m_2$ and $m_1> n_1$. Then $x_iy_{\phi^{n_1}(i)}=u_iv_{\phi^{m_1}(i)}$ for each $i \in I$, and (vii)  has the following RDP$_1$ decomposition

$$
\begin{matrix}
(m_1,\langle u_i\rangle)  &\vline & (n_1,\langle x_i\rangle) & (m_1-n_1,\langle x_{\phi^{-n_1}(i)}^{-1}u_{\phi^{-n_1}(i)}\rangle)\\
(m_2,\langle v_{i} \rangle) &\vline & (0,e^I) & (m_2,\langle v_i\rangle)\\
  \hline     &\vline      &(n_1,\langle x_i\rangle) & (n_2,\langle y_{i}\rangle)
\end{matrix}\ \ \mbox{ if } m_1 >n_1
$$
is an RDP$_1$ decomposition.

(viii) $(n_1, \langle  x_i \rangle) *_\phi (n_2,\langle  y_i \rangle)=
(m_1,\langle  u_i \rangle) *_\phi (m_2,\langle  v_i \rangle)$ for $x_i, y_i,u_i, v_i\in G $, for each $i \in I$, where $n_1, n_2, m_1,m_2 \ge 1$, $n_1+n_2=n=m_1+m_2$ and $n_1> m_1$. Then $x_i y_{\phi^{n_1}(i)}=u_iv_{\phi^{m_1}(i)}$ for each $i \in I$. Hence, the following table

$$
\begin{matrix}
(n_1,\langle x_i\rangle)  &\vline & (m_1,\langle u_i\rangle) & (n_1-m_1,\langle u_{\phi^{-m_1}(i)}^{-1}x_{\phi^{-m_1}(i)}\rangle)\\
(n_2,\langle y_{i} \rangle) &\vline & (0,e^I) & (n_2,\langle y_i\rangle)\\
  \hline     &\vline      &(m_1,\langle u_i\rangle) & (m_2,\langle v_{i}\rangle)
\end{matrix}\ \ \mbox{ if } n_1 >m_1
$$
gives an RDP$_1$ decomposition for (viii).

(ix) $(n_1, \langle  x_i \rangle) *_\phi (n_2,\langle  y_i \rangle)=
(m_1,\langle  u_i \rangle) *_\phi (m_2,\langle  v_i \rangle)$ for $x_i, y_i,u_i, v_i\in G $, for each $i \in I$, where $n_1, n_2, m_1,m_2 \ge 1$, $n_1+n_2=n=m_1+m_2$ and $n_1= m_1$.  Then $x_iy_{\phi^{n_1}(i)}= u_iv_{\phi^{n_1}(i)}$. The directness of $G$ entails that, for
every $i \in I$, there is $d_i\in G$ such that $x_i,y_i,u_i,v_i\ge d_i$. Hence, $d_i^{-1}x_iy_{\phi^{n_1}(i)}d^{-1}_{\phi^{n_1}(i)}= d_i^{-1}u_iv_{\phi^{n_1}(i)}d^{-1}_{\phi^{n_1}(i)}$. The RDP$_1$ holding in $G$, we have the following RDP$_1$ table

$$
\begin{matrix}
d_i^{-1}x_i  &\vline & c_i^{11} & c_i^{12}\\
y_{\phi^{n_1}(i)}d^{-1}_{\phi^{n_1}(i)} &\vline & c_i^{21} & c_i^{22}\\
  \hline     &\vline      &d_i^{-1}u_i & v_{\phi^{n_1}(i)}d^{-1}_{\phi^{n_1}(i)}
\end{matrix}\ \ ,
$$
so that

$$
\begin{matrix}
x_i  &\vline & d_ic_i^{11} & c_i^{12}\\
y_{\phi^{n_1}(i)} &\vline & c_i^{21} & c_i^{22}d_{\phi^{n_1}(i)}\\
  \hline     &\vline      &u_i & v_{\phi^{n_1}(i)}
\end{matrix}\ \ .
$$

It gives the RDP$_1$ decomposition of (ix)

$$
\begin{matrix}
(n_1,\langle x_i \rangle)  &\vline & (n_1,\langle d_ic_i^{11}\rangle) & (0,\langle c_{\phi^{-n_1}(i)}^{12}\rangle )\\
(n_2,\langle  y_i\rangle) &\vline & (0,\langle c_{\phi^{-n_1}(i)}^{21}\rangle) & (n_2,\langle c_{\phi^{-n_1}(i)}^{22}d_i\rangle)\\
  \hline     &\vline      &(n_1,\langle u_i \rangle)& (n_2,\langle v_i \rangle)
\end{matrix}\ \ .
$$

Now assume that $G_2$ satisfy RDP$_2$.  By \cite[Prop 4.2(ii)]{DvVe1}, a directed po-group $G$ satisfies \RDP$_2$ iff $G$ is an $\ell$-group. It is easy to verify that if $G$ is an $\ell$-group, so is $\mathbb Z \lsemiphi G$.
Another proof of this implication follows all previous steps (i)--(ix)  for RDP$_2$ assumptions which prove that the unital po-group $(\mathbb Z \lsemiphi G, u)$ has RDP$_2$.
\end{proof}

Finally, we present an answer to an open question posed in \cite{DvuK}: Describe a unital po-group $(H,u)$ with RDP$_1$ such that $K_{I}^{\lambda,\rho}(G)\cong \Gamma(H,u)$ when $G$ satisfies RDP$_1$.

\begin{theorem}\label{th:3.6}
Let $G$ be a directed po-group satisfying \RDP$_1$. For the kite pseudo effect algebra $K_{I}^{\lambda,\rho}(G)$, the unital po-group $(\mathbb Z \lsemiphi G, u)$, where $\phi= \rho^{-1}\circ \lambda$, is a unique (up to isomorphism) unital po-group with \RDP$_1$\, such that $K_{I}^{\lambda,\rho}(G)\cong \Gamma(\mathbb Z \lsemiphi G, u)$,
\end{theorem}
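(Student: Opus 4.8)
The plan is to assemble the pieces already established in this section and then invoke the uniqueness half of Theorem \ref{th:2.1}. First I would apply Theorem \ref{th:3.2} to obtain the isomorphism $K_{I}^{\lambda,\rho}(G)\cong E_\phi^I(G)=\Gamma(\mathbb Z \lsemiphi G,u)$ with $\phi=\rho^{-1}\circ\lambda$. This step identifies the given kite with the interval of the lexicographic extension, so that the candidate representing po-group $(\mathbb Z \lsemiphi G,u)$ is already in hand, together with an explicit isomorphism onto $\Gamma(\mathbb Z \lsemiphi G,u)$.

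Next, since by hypothesis $G$ is directed and satisfies \RDP$_1$, I would invoke Theorem \ref{th:3.4} (equivalently Theorem \ref{th:3.5}) to conclude that the unital po-group $(\mathbb Z \lsemiphi G,u)$ itself satisfies \RDP$_1$. Transporting \RDP$_1$ across the isomorphism of the previous step then shows that $K_{I}^{\lambda,\rho}(G)$ is a pseudo effect algebra with \RDP$_1$, and simultaneously exhibits $(\mathbb Z \lsemiphi G,u)$ as \emph{a} unital po-group with \RDP$_1$ representing it.

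Finally, Theorem \ref{th:2.1} asserts that a pseudo effect algebra with \RDP$_1$ determines its representing unital po-group with \RDP$_1$ uniquely up to isomorphism of unital po-groups. Applying this to $K_{I}^{\lambda,\rho}(G)$, and noting that $(\mathbb Z \lsemiphi G,u)$ is a unital po-group with \RDP$_1$ for which $\Gamma(\mathbb Z \lsemiphi G,u)\cong K_{I}^{\lambda,\rho}(G)$, the uniqueness clause forces $(\mathbb Z \lsemiphi G,u)$ to be \emph{the} unique such po-group, which is precisely the assertion.

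I do not expect a genuine obstacle at this stage: the substantive work, namely verifying that the lexicographic extension $\mathbb Z \lsemiphi G$ inherits \RDP$_1$ from $G$, has already been carried out in Theorems \ref{th:3.4} and \ref{th:3.5}, and the explicit identification of $\phi$ was settled in Theorem \ref{th:3.2}. The only points requiring care are that the directedness of $G$, which is exactly the hypothesis needed to invoke Theorem \ref{th:3.4}, is among our assumptions, and that the uniqueness part of Theorem \ref{th:2.1} is what upgrades ``a representing unital po-group with \RDP$_1$'' to ``the unique representing one'', thereby closing the open problem of \cite{DvuK}.
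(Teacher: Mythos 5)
Your proposal is correct and follows essentially the same route as the paper: identify the kite with $\Gamma(\mathbb Z \lsemiphi G,u)$ via Theorem \ref{th:3.2}, use Theorem \ref{th:3.4} to get \RDP$_1$ for the unital po-group $(\mathbb Z \lsemiphi G,u)$ (and hence for the kite), and then invoke the uniqueness clause of Theorem \ref{th:2.1}. The only cosmetic difference is that the paper obtains \RDP$_1$ for the kite by citing an external result of \cite{DvuK}, whereas you transport it across the isomorphism, which is equally valid.
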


\begin{proof}
Since the po-group $G$ satisfies RDP$_1$, \cite[Thm 4.1]{DvuK}, the kite pseudo effect algebra $K_{I}^{\lambda,\rho}(G)$ satisfies RDP$_1$, too. By Theorem \ref{th:3.2}, the kite pseudo effect algebra $K_{I}^{\lambda,\rho}(G)$ is isomorphic to the pseudo effect algebra $\Gamma(\mathbb Z \lsemiphi G, u)$. According to Theorem \ref{th:3.4}, both the pseudo effect algebra $\Gamma(\mathbb Z \lsemiphi G, u)$ and the unital po-group $(\mathbb Z \lsemiphi G, u)$ satisfy RDP$_1$. Hence, by the Representation Theorem of pseudo effect algebras with RDP$_1$, Theorem \ref{th:2.1}, the kite pseudo effect algebra $K_{I}^{\lambda,\rho}(G)$ has an isomorphic representation by the unital po-group $(\mathbb Z \lsemiphi G, u)$ with RDP$_1$.
\end{proof}

A similar result we have for kite pseudo effect algebras $K_{I}^{\lambda,\rho}(G)$ when $G$ is an $\ell$-group.

\begin{theorem}\label{th:3.7}
If $G$ is a po-group with $\RDP_2$, then the kite pseudo effect algebra $K_{I}^{\lambda,\rho}(G)$ has a representation by the unital po-group $(\mathbb Z \lsemiphi G, u)$ with $\RDP_2$,  where $\phi= \rho^{-1}\circ \lambda$.
\end{theorem}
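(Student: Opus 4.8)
The representation itself is essentially free: by Theorem \ref{th:3.2} the kite $K_I^{\lambda,\rho}(G)$ is already isomorphic to $\Gamma(\mathbb Z\lsemiphi G,u)$ with $\phi=\rho^{-1}\circ\lambda$, for an arbitrary po-group $G$. Hence the only genuinely new content of the statement is that the unital po-group $(\mathbb Z\lsemiphi G,u)$ satisfies \RDP$_2$; granting this, Theorem \ref{th:3.2} immediately delivers the asserted representation. So I would organize the proof as ``isomorphism (inherited from Theorem \ref{th:3.2}) plus \RDP$_2$ for the ambient group.''

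For the \RDP$_2$ part I would avoid verifying four-cell tables and instead exploit the characterization \cite[Prop 4.2(ii)]{DvVe1}: a directed po-group satisfies \RDP$_2$ iff it is an $\ell$-group. Assuming $G$ directed, the hypothesis ``$G$ has \RDP$_2$'' is therefore the same as ``$G$ is an $\ell$-group,'' and the whole claim reduces to showing that $\mathbb Z\lsemiphi G$ is again an $\ell$-group. This is exactly the ``easy to verify'' remark closing the proof of Theorem \ref{th:3.4}, which I would now make explicit; it is the step that carries the real weight.

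Since $\mathbb Z\lsemiphi G$ is already known to be a po-group (see the construction preceding Theorem \ref{th:3.2}), it suffices to check that its order is a lattice. For $(n,\langle x_i\rangle)$ and $(m,\langle y_i\rangle)$ with $n<m$, the first is below the second, so their meet is $(n,\langle x_i\rangle)$ and their join is $(m,\langle y_i\rangle)$; when $n=m$ the lexicographic order restricts to the coordinatewise order on $G^I$, and as $G$ is an $\ell$-group so is $G^I$, giving meet $(n,\langle x_i\wedge y_i\rangle)$ and join $(n,\langle x_i\vee y_i\rangle)$. Thus all binary meets and joins exist and $\mathbb Z\lsemiphi G$ is an $\ell$-group. (The twist $\phi$ causes no trouble, since in $*_\phi$ it acts by permuting the $G^I$-coordinates, an order-automorphism of the product order.)

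Finally I would assemble the pieces: $\mathbb Z\lsemiphi G$ is an $\ell$-group, hence directed with \RDP$_2$ by \cite[Prop 4.2(ii)]{DvVe1}, so $(\mathbb Z\lsemiphi G,u)$ is a unital po-group with \RDP$_2$; together with the isomorphism of Theorem \ref{th:3.2} this yields $K_I^{\lambda,\rho}(G)\cong\Gamma(\mathbb Z\lsemiphi G,u)$ with \RDP$_2$. As a bonus, \RDP$_2$ makes the kite a pseudo MV-algebra, so Theorem \ref{th:2.2} even identifies $(\mathbb Z\lsemiphi G,u)$ as the \emph{unique} unital $\ell$-group representing it. The one delicate point I would flag is the directedness of $G$: \RDP$_2$ by itself does not force an $\ell$-group structure (a trivially ordered group satisfies \RDP$_2$ vacuously yet $\mathbb Z\lsemiphi G$ then fails to be a lattice), and it is precisely directedness that lets \cite[Prop 4.2(ii)]{DvVe1} turn \RDP$_2$ into the $\ell$-group property used throughout.
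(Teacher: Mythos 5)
Your proposal is correct and follows essentially the same route as the paper: the paper's proof of this theorem is a one-line appeal to Theorem \ref{th:3.4} (whose \RDP$_2$ case is handled exactly as you do, via \cite[Prop 4.2(ii)]{DvVe1} and the remark that $\mathbb Z\lsemiphi G$ is an $\ell$-group when $G$ is), combined with the isomorphism of Theorem \ref{th:3.2}. Your explicit verification of the lattice structure and your flagging of the implicit directedness hypothesis merely spell out details the paper leaves as ``easy to verify.''
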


\begin{proof}
The result follows from an application of Theorem \ref{th:3.4}.
\end{proof}

Theorem \ref{th:3.7} represents also the kite pseudo MV-algebra $K_{I}^{\lambda,\rho}(G)$ when $G$ is an $\ell$-group:

\begin{theorem}\label{th:3.8}
If $G$ is an $\ell$-group, then the kite pseudo effect algebra $K_{I}^{\lambda,\rho}(G)$ has a representation by the unital $\ell$-group $(\mathbb Z \lsemiphi G, u)$,  where $\phi= \rho^{-1}\circ \lambda$.
\end{theorem}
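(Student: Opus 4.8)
The plan is to obtain Theorem \ref{th:3.8} as the $\ell$-group specialization of Theorem \ref{th:3.7}. The only thing beyond a direct citation is verifying that, for an $\ell$-group $G$, the representing po-group $\mathbb Z \lsemiphi G$ is itself an $\ell$-group; everything else is supplied by the earlier machinery.

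First I would observe that an $\ell$-group is a directed po-group, and by \cite[Prop 4.2(ii)]{DvVe1} a directed po-group has $\RDP_2$ exactly when it is an $\ell$-group, so $G$ is a directed po-group with $\RDP_2$. Thus Theorem \ref{th:3.7} applies with $\phi=\rho^{-1}\circ\lambda$ and yields the representation $K_I^{\lambda,\rho}(G)\cong\Gamma(\mathbb Z \lsemiphi G,u)$ in which $(\mathbb Z \lsemiphi G,u)$ carries $\RDP_2$.

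Next I would promote this $\RDP_2$ po-group to an $\ell$-group. Since $u$ is a strong unit, $\mathbb Z \lsemiphi G$ is directed, so being directed with $\RDP_2$ it is an $\ell$-group again by \cite[Prop 4.2(ii)]{DvVe1}. To make this transparent I would read the lattice operations straight off the lexicographic order of (3.1): when the $\mathbb Z$-coordinates differ, meet and join are decided by that coordinate alone, while when they agree one has
$$
(n,\langle x_i\rangle)\wedge(n,\langle y_i\rangle)=(n,\langle x_i\wedge y_i\rangle),\qquad
(n,\langle x_i\rangle)\vee(n,\langle y_i\rangle)=(n,\langle x_i\vee y_i\rangle),
$$
which are well defined precisely because $G$ is an $\ell$-group. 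The twisting bijection $\phi$ enters only the group multiplication, not the order, so it does not disturb these formulas, confirming that $\mathbb Z \lsemiphi G$ is an $\ell$-group.

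Finally, uniqueness is automatic: by Theorem \ref{th:3.1} the kite $K_I^{\lambda,\rho}(G)$ is a pseudo MV-algebra, and Theorem \ref{th:2.2} says a pseudo MV-algebra is the interval of a unique unital $\ell$-group up to isomorphism; hence $(\mathbb Z \lsemiphi G,u)$ is that group. I do not expect any serious obstacle here: the single point needing care is the lattice computation above, and even that is routine since the order is plain lexicographic and $G$ already provides the coordinatewise lattice structure.
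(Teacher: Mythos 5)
Your proposal is correct and follows essentially the same route as the paper: both reduce to the RDP$_2$ case of Theorem \ref{th:3.4} (via Theorem \ref{th:3.7}), note that $\mathbb Z \lsemiphi G$ is then an $\ell$-group, and invoke the pseudo MV-algebra representation theorem (Theorem \ref{th:2.2}) for the identification and uniqueness. Your explicit verification of the lattice operations merely fills in the step the paper dismisses as ``easy to verify.''
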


\begin{proof}
By Theorem \ref{th:3.4}, the kite pseudo effect algebra $K_{I}^{\lambda,\rho}(G)$ has RDP$_2$ and the unital po-group $(\mathbb Z \lsemiphi G, u)$ is in fact an $\ell$-group. Therefore, the kite pseudo effect algebra $K_{I}^{\lambda,\rho}(G)$ can be converted into a pseudo MV-algebra using (2.2). Applying the basic Representation Theorem of pseudo MV-algebras, Theorem \ref{th:2.2}, we see that the kite effect algebra $K_{I}^{\lambda,\rho}(G)$ is isomorphic to the interval in the unital $\ell$-group $(\mathbb Z \lsemiphi G, u)$.
\end{proof}

As we have just noted, if $G$ is an $\ell$-group, the kite pseudo effect algebra can be converted into a pseudo MV-algebra; we call it a {\it kite pseudo MV-algebra}. Such pseudo MV-algebras were studied firstly in \cite{DvKo}.

\section{Kite $n$-perfect Pseudo Effect Algebras}

In this section, we introduce so-called kite $n$-perfect pseudo effect algebras. They generalize kites because a kite $n$-perfect pseudo effect algebra is a kite if and only if $n=1$. We describe some basic properties and we concentrate to characterize subdirectly irreducible kite $n$-perfect pseudo effect algebras.

Now let $u_n =(n,e^I)$ for each integer $n \ge 1$. Then the pseudo effect algebra
$$
E_{I,n}^{\phi}(G)=\Gamma(\mathbb Z \lsemiphi G, u_n)\eqno(4.1)
$$
can be decomposed into $n+1$ slices, $E_0,E_1,\ldots, E_n$, of the form $E_0=\{(0,\langle x_i: i \in I\rangle): x_i\ge e \mbox{ for every } i \in I \}$, $E_k:=\{(k, \langle x_i: i\in I\rangle): \ x_i \in G\}$ for $k=1,\ldots,n-1$ and $E_n=\{(n,
\langle x_i: i\in I \rangle): x_i \le e \mbox{ for any } i \in I\}.$

Then $E_0,E_1,\ldots,E_n$ have the following properties:
\begin{itemize}
\item[(i)] $\bigcup_{k=0}^n E_k = E,$ $E_k\cap E_j = \emptyset$ for $k\ne j$, $k,j=0,\ldots, n$,
\item[(ii)] $E_k\preceq E_j$ whenever $k<j$,
\item[(iii)] $E_k+E_j$ is defined and $E_k+E_j = E_{k+j}$ whenever $k+j<n$,
\item[(iv)] $E_k+E_j$ is not defined in $E$ whenever $k+j>n$,
\item[(v)] $(k, \langle x_i: i\in I\rangle)^- = (n-k, \langle x^{-1}_{\phi^{n-k}(i)}: i \in I\rangle)$, $(k, \langle x_i: i\in I\rangle)^\sim = (n-k, \langle x^{-1}_{\phi^{-k}(i)}: i \in I\rangle)$,
and $E_k^-= E_{n-k}  =E_k^\sim $ for $k=0,1,
\ldots, n$,
\item[(vi)] $E_0$ is a unique ideal of $E$ that is normal and maximal.

\item[(vii)] $E$ has a unique state $s$, namely, $s(E_i)=i/n$, $i=0,1,\ldots,n$.
\end{itemize}

If in particular $|I|=1$, then there is a unique bijection $\phi: I\to I$ namely, the identity on $I$. In such a case, (4.1) is simply an interval in the lexicographic product $\mathbb Z \lex G$, and such pseudo effect algebras were studied in \cite{DXY, DvKr, DvXi} under that name $n$-perfect pseudo effect algebras. They can be decomposed into $n+1$ comparable slides that roughly speaking satisfy the properties described by (i)--(vi) from the latter paragraph. Therefore, pseudo effect algebras of the form (3.2) will be said to be {\it kite $n$-perfect pseudo effect algebras}.

In \cite{DXY, DvKr, DvXi}, there are presented  algebraic conditions posed to a pseudo effect algebra $E$ in order to be isomorphic to a pseudo effect algebra of the form $\Gamma(\mathbb Z \lex G,(n,0))$ for some directed po-group $G$ with RDP$_1$. Motivating by these $n$-perfect pseudo effect algebras, we suggest to find algebraic conditions which guarantee that a pseudo effect algebra is isomorphic to some kite $n$-perfect pseudo effect algebra.

If $E$ is a pseudo MV-algebra, then there is a one-to-one correspondence between congruences and normal ideals of pseudo MV-algebras. For pseudo effect algebras this correspondence is more delicated. However, for pseudo effect algebras with RDP$_1$ there is an analogous direct relationship.

We note that an equivalence $\sim$ on  a pseudo effect algebra $E$ is a {\it congruence}, if for $a_1,a_2,b_1,b_2$ such that $a_1\sim a_2$ and $b_1 \sim b_2$, the existence $a_1+b_1$ and $a_2+b_2$ in $E$ implies $a_1+b_1 \sim a_2+b_2.$  If $I$ is a normal ideal of $E$ with RDP$_1$, then the relation $\sim_I$ defined on $E$ by
$a\sim_I b$ iff there are $e,f \in I$ such that $a\minusli e=b\minusli f$ is a congruence on $E$, and $E/\sim_I$ is again a pseudo effect algebra with RDP$_1$, \cite[Prop 3.1]{DvVe3}, \cite[Prop 4.1]{185}.

We remind that by an {\it o-ideal} of a directed po-group $G$ we understand any normal directed convex subgroup $H$ of $G$. If $G$ is a po-group, so is $G/H,$ where $x/H \le y/H$ iff $x\le h_1+y$ for some $h_1\in H$ iff $x \le y+h_2$ for some $h_2 \in H.$
If $G$ satisfies one of RDP's, then $G/H$ satisfies the same RDP, \cite[Prop 6.1]{174}.

We note that a pseudo effect algebra $E$ is a {\it subdirect product} of a system of pseudo effect algebras $(E_t: t \in T)$ if there is an injective homomorphism $h: E\to \prod_{t\in T}E_t$ such that $\pi_t(h(E))=E_t$ for each $t \in T,$ where $\pi_t$ is the $t$-th projection of $\prod_{t \in T} E_t$ onto $E_t.$ In addition, $E$ is {\it subdirectly irreducible} if whenever $E$ is a subdirect product of $(E_t: t \in T),$ there exists $t_0 \in T$ such that $\pi_{t_0}\circ h$ is an isomorphism of pseudo effect algebras.

Hence, subdirect irreducibility of kite $n$-perfect pseudo effect algebras with RDP$_1$ will be studied in the relation between the least non-trivial (i.e. $\ne \{0\}$) normal ideals of kite pseudo effect algebras and the least non-trivial (i.e. $\ne \{e\}$) o-ideals of the corresponding po-groups.

In what follows, we show when a kite $n$-perfect pseudo effect algebra is subdirectly irreducible and its relation to subdirect irreducibility of the original po-group. These questions were studied in \cite{DvuK, DvHo} for kite pseudo effect algebras. We note that our proofs follow some ideas from \cite{DvuK, DvHo} improved for our situation.

Let $\alpha$ be a cardinal.  An element $(k,\langle x_i\colon i \in I\rangle) \in \Gamma(\mathbb Z \lsemiphi G, u_n)$ is said to be $\alpha$-{\it dimensional}  if $|\{i \in I: x_i \ne e\}|=\alpha.$ One-dimensional elements are particulary easy to work with. 

\begin{proposition}\label{pr:6.3}
Let $I$ be a set and $\phi:I \to I$ be a bijection.
If $H$ is an o-ideal of a directed po-group $G$ and $N=H^+,$ then $N^I:=\{(0,\langle x_i\colon i\in I\rangle): x_i \in N,\ i\in I\}$ is a normal ideal of the kite $n$-perfect pseudo effect algebra
$\Gamma(\mathbb Z \lsemiphi G, u_n)$.  In addition, if $N_f^I$ denotes the set of all finite dimensional elements from $N^I,$ then $N_f^I$ is a non-trivial normal ideal of the kite pseudo effect algebra.

Conversely, if $J$ is a normal ideal of $\Gamma(\mathbb Z \lsemiphi G, u_n)$, $J \subseteq N^I$, then $\pi_i(J)$ is the positive cone of an o-ideal of $H,$ where
$\pi_i$ is the $i$-th projection of $(0,\langle x_i \colon i \in I\rangle) \mapsto x_i.$
\end{proposition}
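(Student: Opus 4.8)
The plan is to carry out everything inside the po-group $\mathbb Z \lsemiphi G$, translating the partial-operation statements into honest group identities; the one computation that drives all three parts is the conjugation formula
\[
(k,\langle z_i\rangle) *_\phi (0,\langle a_i\rangle) *_\phi (k,\langle z_i\rangle)^{-1} = (0,\langle z_i\, a_{\phi^k(i)}\, z_i^{-1}\rangle),
\]
obtained from $(3.1)$ and the inversion formula. For the first assertion I would first note the conceptual content: $\{0\}\times H^I$ is an o-ideal of $\mathbb Z\lsemiphi G$ (it is a subgroup since $\phi^0=\mathrm{id}$, it is normal by the displayed formula together with normality of $H$ in $G$, and it is convex and directed because $H$ is), and $N^I$ is exactly its positive cone, lying entirely below $u_n=(n,e^I)$. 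To verify directly that $N^I$ is a normal ideal: closure under defined $+$ reduces to closure of $H^+$ under multiplication (a sum of two $E_0$-elements is always defined, as $0+0<n$), and downward closure uses that $E_0$ is the bottom slice together with convexity of $H$ in $G$. For normality, given $x=(k,\langle z_i\rangle)$ and $(0,\langle a_i\rangle)\in N^I$ with $x+(0,\langle a_i\rangle)$ defined, set $b_i:=z_i a_{\phi^k(i)} z_i^{-1}$; normality of $H$ and the fact that conjugation preserves $G^+$ give $(0,\langle b_i\rangle)\in N^I$, while $(0,\langle b_i\rangle)*_\phi x = x *_\phi (0,\langle a_i\rangle)$ as group elements, so $(0,\langle b_i\rangle)+x=x+(0,\langle a_i\rangle)$ with definedness inherited. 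The reverse inclusion is the same identity solved for the $a_i$.

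For $N_f^I$ I would simply check that finite support is preserved by every operation used above: the support of a product is contained in the union of the supports, the support of a dominated element in the support of its bound, and the support of the conjugate $\langle b_i\rangle$ is $\phi^{-k}$ applied to the support of $\langle a_i\rangle$, hence finite since $\phi$ is a bijection. Thus the ideal and normality arguments restrict verbatim to $N_f^I$. It is proper because $N_f^I\subseteq E_0\subsetneq E$, and (when $H\neq\{e\}$) nonzero because an element supported in a single coordinate by some $h\in H^+\setminus\{e\}$ belongs to it.

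For the converse, fix $i$ and put $P:=\pi_i(J)$; since $J\subseteq N^I\subseteq E_0$, $P$ is a well-defined subset of $H^+$. I would establish, in order: $e\in P$ and $P$ is closed under multiplication (from $0\in J$ and closure of $J$ under the defined $E_0$-sum); $P$ is convex in $H^+$ (to place $c$ with $e\le c\le a\in P$ into $P$, put $c$ in coordinate $i$ and $e$ elsewhere and use downward closure of $J$); and $P$ is upward directed, since for $a,b\in P$ the product $ab$ lies in $P$ and dominates both. I would then set $K_i:=P\,P^{-1}$ and claim it is the desired o-ideal with $K_i^+=P$. Closure of $K_i$ under inverses is immediate; that $K_i^+=P$ and that $K_i$ is convex and directed in $H$ follow from convexity and directedness of $P$.

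The two delicate points — which I expect to be the main obstacles — are closure of $K_i$ under products and normality of $K_i$ in $H$. For products I would use an Ore-type argument: given $b,c\in P$, pick $m\in P$ above both (directedness), write $m=bs=ct$ with $s=b^{-1}m,\,t=c^{-1}m\in P$ by convexity, so that $b^{-1}c=s t^{-1}$ and hence $(ab^{-1})(cd^{-1})=(as)(dt)^{-1}\in K_i$. Normality is where the hypothesis that $J$ is \emph{normal} is essential and where the partial-algebra setting is most restrictive: the relation $x+J=J+x$ yields only conjugation by \emph{positive} elements placed in a single coordinate, namely taking $x=(0,\langle z_j\rangle)$ with $z_i=p\in H^+$ produces an element of $J$ with $i$-th coordinate $p\,a\,p^{-1}$, and, from $J+x=x+J$, one with $i$-th coordinate $q^{-1}a\,q$ for $z_i=q\in H^+$; so conjugation by any positive element from either side preserves $P$. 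To obtain invariance under an arbitrary $h\in H$ I would use directedness of $H$ to write $h=p q^{-1}$ with $p,q\in H^+$ and compose the two one-sided conjugations, giving $h a h^{-1}=p(q^{-1}a q)p^{-1}\in P$. Extracting this full two-sided $H$-conjugation invariance of $P$ from the one-coordinate, positive-element normality available in the partial algebra is the crux of the argument.
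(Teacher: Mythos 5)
Your argument is correct. For the forward direction you follow essentially the same route as the paper: the entire content of normality of $N^I$ is the identity $(k,\langle z_i\rangle)*_\phi(0,\langle a_i\rangle)=(0,\langle b_i\rangle)*_\phi(k,\langle z_i\rangle)$ with $b_i=z_i\,a_{\phi^k(i)}\,z_i^{-1}$, which lands back in $N=H^+$ by normality of $H$ in $G$ together with the fact that conjugation preserves positivity; the paper records exactly this computation (in the form $x_ia_{\phi^k(i)}=b_ix_i$) and, like you, disposes of $N_f^I$ with a one-line remark, the non-triviality tacitly presupposing $H\neq\{e\}$, which you correctly flag. Where you genuinely diverge is the converse: the paper's proof stops after the $N_f^I$ claim and never addresses the assertion about $\pi_i(J)$ at all, whereas you supply a complete argument. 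Your two ``delicate points'' are indeed the right ones to isolate. Only $E_0$-conjugators keep you in the fixed coordinate $i$ (conjugating by $(k,\langle z_j\rangle)$ with $k\ge 1$ relates $\pi_j(J)$ to $\pi_{\phi^k(j)}(J)$ rather than to itself), so normality of $J$ only yields one-sided conjugation of $P=\pi_i(J)$ by \emph{positive} elements, and the passage to arbitrary $h\in H$ via the directedness decomposition $h=pq^{-1}$ is exactly what is needed; similarly the Ore-type identity $b^{-1}c=st^{-1}$ is the correct device for showing that $PP^{-1}$ is a subgroup with positive cone $P$, and convexity and directedness of $K_i$ then follow as you indicate. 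In short, your write-up coincides with the paper where the paper gives details and proves strictly more than the paper does where it gives none.
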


\begin{proof}
It is evident that $N^I$ is an ideal of the kite $n$-perfect pseudo effect algebra. To show that normality of $N^I$, we have to verify $x+N^I=N^I+x$ for any $(k,\langle x_i: i \in I\rangle)\in E_k,$ $k=0,1,\ldots,n$. This means
to show $(k,\langle x_i)+(0,\langle a_i\rangle)=(0,\langle b_i \rangle)+(k,\langle x_i\rangle)$ for $a_i, b_i\ge e$ $(i\in I)$. This means $x_ia_{\phi^k(i)}=b_ix_i$. Using normality of $N$, we see that $N^I$ is normal, too.

The same is true also for $N^I_f$ which means that $N^I_f$ is a non-trivial normal ideal of the kite $n$-perfect pseudo effect algebra.
\end{proof}

\begin{proposition}\label{pr:6.5}
Let $\Gamma(\mathbb Z \lsemiphi G, u_n)$, a kite $n$-perfect pseudo effect algebra of a po-group $G$, have the least non-trivial normal ideal. Then $G$ is subdirectly irreducible.
\end{proposition}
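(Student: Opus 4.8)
The plan is to read the least non-trivial o-ideal of $G$ straight off the least non-trivial normal ideal of the kite, using the correspondence set up in Proposition \ref{pr:6.3}. Write $E=\Gamma(\mathbb Z \lsemiphi G, u_n)$ and let $J_0$ be its least non-trivial normal ideal, i.e. $J_0\ne\{0\}$ and $J_0\subseteq J$ for every non-trivial normal ideal $J$ of $E$. Throughout I use that $G$ is directed, as in Proposition \ref{pr:6.3}.

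First I would locate $J_0$ inside the bottom slice. Taking $H=G$ and $N=G^+$ in Proposition \ref{pr:6.3}, the set $N_f^I$ of finite-dimensional elements of $E_0=N^I$ is a non-trivial normal ideal of $E$; by minimality $J_0\subseteq N_f^I\subseteq E_0$. Hence every element of $J_0$ has the form $(0,\langle a_i:i\in I\rangle)$ with $a_i\ge e$, so the converse part of Proposition \ref{pr:6.3} applies to $J_0$: for each $i$ the projection $\pi_i(J_0)$ is the positive cone of an o-ideal $H^{(i)}$ of $G$.

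Next I would exhibit the monolith. Since $J_0\ne\{0\}$, there is an index $i_0$ with $\pi_{i_0}(J_0)\ne\{e\}$, so $H^{(i_0)}$ is a non-trivial o-ideal of $G$. To see it is the least one, let $H$ be any non-trivial o-ideal and set $N=H^+$; by Proposition \ref{pr:6.3} its finite-dimensional part $N_f^I$ is a non-trivial normal ideal of $E$, whence $J_0\subseteq N_f^I$ by minimality. Projecting at $i_0$ yields $(H^{(i_0)})^+=\pi_{i_0}(J_0)\subseteq \pi_{i_0}(N_f^I)\subseteq N=H^+$, and since o-ideals are directed they are determined by their positive cones, so $H^{(i_0)}\subseteq H$. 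Thus $H^{(i_0)}$ is a non-trivial o-ideal contained in every non-trivial o-ideal of $G$; it is therefore the least non-trivial o-ideal, and $G$ is subdirectly irreducible.

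The two appeals to Proposition \ref{pr:6.3} and the remark that a directed group equals the difference of its positive cone are routine. The only genuinely delicate point is that $\phi$ may have several orbits on $I$, so a priori the o-ideals $H^{(i)}$ attached to different coordinates need not coincide; the minimality of $J_0$ is exactly what dissolves this, since it forces any non-trivial $H^{(i_0)}$ to lie below every non-trivial o-ideal (in particular below every other non-trivial $H^{(j)}$), making the choice of $i_0$ immaterial. I would also dispose of the degenerate case $G=\{e\}$ separately, where $G$ has no non-trivial o-ideal and the statement is read vacuously.
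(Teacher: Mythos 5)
Your proof is correct, but it runs in the opposite direction to the paper's. The paper argues by contradiction: if $G$ were not subdirectly irreducible, it takes a family $\{H_t : t\in T\}$ of non-trivial o-ideals with $\bigcap_{t}H_t=\{e\}$, uses only the \emph{forward} half of Proposition~\ref{pr:6.3} to see that each $N_t^I$ (with $N_t=H_t^+$) is a non-trivial normal ideal of the kite, so the least non-trivial normal ideal lies in $\bigcap_t N_t^I$; but every element of that intersection has all coordinates in $\bigcap_t H_t=\{e\}$ and is therefore $0$, a contradiction. You instead construct the monolith of $G$ directly: you locate the kite's least normal ideal $J_0$ inside the bottom slice, invoke the \emph{converse} half of Proposition~\ref{pr:6.3} to read off o-ideals $H^{(i)}$ from the coordinate projections, and show that a non-trivial $H^{(i_0)}$ is contained in every non-trivial o-ideal. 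Both arguments are sound. Yours yields a sharper conclusion --- an explicit least non-trivial o-ideal of $G$, namely the one with positive cone $\pi_{i_0}(J_0)$ --- but at the cost of relying on the converse statement of Proposition~\ref{pr:6.3}, which the paper asserts without proof (its argument for that proposition only treats the forward direction); the paper's contrapositive route deliberately needs only the easy, proved half. Your remarks on the degenerate case $G=\{e\}$ and on the irrelevance of which coordinate $i_0$ is chosen are appropriate and close the only points where the argument could have wobbled.
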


\begin{proof}
Suppose the converse, i.e. $G$ is not subdirectly irreducible. Then there exists a set $\{H_t: t \in T\}$ of non-trivial o-ideals of $G$ such that $\bigcap_{t \in T} H_t =\{e\}.$ According to Proposition \ref{pr:6.3}, let us define $N_t=H_t^+,$ $t \in T.$ Then every $N_t^I$ is a normal ideal of the kite $n$-perfect pseudo effect algebra $\Gamma(\mathbb Z \lsemiphi G, u_n)$. Hence, $\bigcap_{t \in T} H_t \ne \{0\}$ and there is a non-zero element $f=(0,\langle f_i\colon i \in I\rangle)\in \bigcap_{t\in T} N_t^I.$ For every index $i \in I,$ $f_i \in H_t$ ($t \in T$) which entails $f_i=e$ for each $i \in I$ and $f=(0,\langle f_i\colon i \in I\rangle) = 0,$ which is a contradiction. Therefore, $G$ is subdirectly irreducible.
\end{proof}

Now we present a useful notion introduced in \cite{DvKo, DvuK} which will be used in the next result. Let $\phi:\to I$ be a bijection. If, for two indices $i,j \in I$, there is an integer $m\in \mathbb Z$ such that  $\phi^m(i)= j$, then $i$ and $j$ are said to be {\it connected}; otherwise, they are called {\it disconnected}.
If all distinct elements of a subset $C$ of $I$ are connected, $C$ is said to be a {\it connected component} of $I$, and $I$ can be decomposed into a system of maximal connected components. We denote by $\mathcal C(I)$ the system of all connected components. We assert that $\phi^{-1}(C)=C$. Indeed, take $j \in \phi^{-1}(C)$. There is $i \in C$ such that $\phi(j)=i$. Now let $i_1$ be an arbitrary index from $C$. There is an integer $m \in \mathbb Z$ such that $\phi^m(i_1)=i$ which yields $\phi^m(i_1)=\phi(j)$ so that $j = \phi^{m-1}(i)$ proving $j \in C$. Now let $i_0 \in C$, then $j:=\phi(i_0)$ is connected with $i_0$ and $j \in \phi^{-1}(C)$.

\begin{theorem}\label{th:6.6}
Let $I$ be a set and $\phi:I \to I$ be a bijection and let $G$ be a directed non-trivial po-group with \RDP$_1.$ Let $\Gamma(\mathbb Z \lsemiphi G, u_n)$ be the kite $n$-perfect pseudo effect algebra corresponding to the po-group $G.$ The following are equivalent:

\begin{enumerate}
\item[{\rm (1)}] $G$ is subdirectly irreducible and for all $i,j\in I$ there exists an integer $m\ge 0$ such that $\phi^{m}(i)=j$ or $\phi^{-m}(i)=j.$
\item[{\rm (2)}] The kite $n$-perfect pseudo effect algebra is  subdirectly irreducible.
\end{enumerate}
\end{theorem}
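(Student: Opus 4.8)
The plan is to recast subdirect irreducibility entirely in terms of ideals. For a pseudo effect algebra with $\RDP_1$ the assignment $J\mapsto{\sim_J}$ recalled above identifies normal ideals with congruences (\cite[Prop 3.1]{DvVe3}, \cite[Prop 4.1]{185}), so $E:=\Gamma(\mathbb Z \lsemiphi G,u_n)$ is subdirectly irreducible exactly when its normal ideals possess a least non-trivial member (a monolith); likewise I read ``$G$ is subdirectly irreducible'' as ``the lattice of o-ideals of $G$ has a least non-trivial element $H$''. I will also use that the condition on $\phi$ in (1) says precisely that $I$ is a single connected component, i.e. $\mathcal C(I)=\{I\}$, and I freely pass between the ideal and congruence formulations.

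For $(1)\Rightarrow(2)$, put $N=H^+$ and claim the monolith of $E$ is the non-trivial normal ideal $N^I_f$ furnished by Proposition \ref{pr:6.3}. Let $J\ne\{0\}$ be an arbitrary normal ideal and pick $0\ne w\in J$. If $w\in E_k$ with $k\ge1$, then since $E_0\preceq E_k$ downward closure forces $E_0\subseteq J$, whence $N^I_f\subseteq E_0\subseteq J$ and we are done. Otherwise $J\subseteq E_0=(G^+)^I$, and applying the converse part of Proposition \ref{pr:6.3} to the o-ideal $G$ itself shows each $\pi_i(J)$ is the positive cone of an o-ideal of $G$. As $J\ne\{0\}$, some $\pi_{i_0}(J)$ is non-trivial, hence contains the monolith, giving $N\subseteq\pi_{i_0}(J)$; by downward closure every one-dimensional element $(0,\langle x_i\rangle)$ with $x_{i_0}\in N$ and $x_i=e$ otherwise lies in $J$.

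The heart of $(1)\Rightarrow(2)$ is then to spread this support. Using normality $x+J=J+x$ and a conjugator in $E_1$ (available since $n\ge1$), a one-dimensional element of $J$ supported at $i_0$ is transported to $\phi(i_0)$ and to $\phi^{-1}(i_0)$ with its value preserved in $N$, the preservation using that $H$ is normal so $N$ is closed under conjugation; the borderline case $n=1$ is handled by taking the conjugator $(1,\langle x_i\rangle)\in E_1$ with $x_{i_0}$ the inverse of the value at $i_0$, so that the witnessing sum equals $u_1$ and its complement lands in $E_0$. Iterating reaches every index of the connected component of $i_0$, which by (1) is all of $I$; summing these one-dimensional elements coordinatewise inside $E_0$ yields $N^I_f\subseteq J$. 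Thus $N^I_f$ is the least non-trivial normal ideal and $E$ is subdirectly irreducible. For the converse $(2)\Rightarrow(1)$, if $E$ is subdirectly irreducible it has a monolith, so $G$ is subdirectly irreducible by Proposition \ref{pr:6.5}. Were $I$ not a single component, choose distinct $C_1,C_2\in\mathcal C(I)$ and let $J_{C_t}$ be the finite-dimensional elements of $E_0$ supported inside $C_t$; the identity $\phi^{-1}(C_t)=C_t$ makes each $J_{C_t}$ a non-trivial normal ideal just as in Proposition \ref{pr:6.3}, while $C_1\cap C_2=\emptyset$ gives $J_{C_1}\cap J_{C_2}=\{0\}$. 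Under the ideal-congruence correspondence this means ${\sim_{J_{C_1}}}\cap{\sim_{J_{C_2}}}$ is the identity, exhibiting $E$ as a subdirect product of the two proper quotients $E/\mathord{\sim}_{J_{C_t}}$ and contradicting subdirect irreducibility; hence $I$ is a single connected component.

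The main obstacle is the support-spreading step: one must check that normality together with the connected-component structure genuinely transports a one-dimensional generator across all of $I$ \emph{inside} $E$, so that the partial sums witnessing $x+J=J+x$ are actually defined in $[0,u_n]$, with the case $n=1$ requiring the special choice of conjugator above. Everything else reduces to the ideal–congruence dictionary and the bookkeeping already packaged in Proposition \ref{pr:6.3}.
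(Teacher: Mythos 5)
Your proposal is correct and follows essentially the same route as the paper: identify the candidate monolith $N^I_f$ via Proposition \ref{pr:6.3}, spread the support of a one-dimensional generator along the $\phi$-orbit (your conjugation by an element of $E_1$ is exactly the paper's double relative negations $x^{-'-'}$, $x^{\sim'\sim'}$ unwound), and for the converse combine Proposition \ref{pr:6.5} with the two disjointly supported normal ideals coming from distinct connected components. The only real difference is that you are more explicit than the paper in reducing an arbitrary non-trivial normal ideal $J$ to one meeting $N^I$ (the $E_k$, $k\ge 1$, case and the projection argument), which is a welcome tightening rather than a change of method.
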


\begin{proof}

(1) $\Rightarrow$ (2). By Theorem \ref{th:3.4}--\ref{th:3.5}, the kite $n$-perfect pseudo effect algebra $\Gamma(\mathbb Z \lsemiphi G, u_n)$ satisfies RDP$_1.$

Let $H$ be the least non-trivial o-ideal of $G$ and let $N=H^+.$
By Proposition \ref{pr:6.3}, $N^I$ and $N_f^I$ are normal ideals of the kite $n$-perfect pseudo effect algebra  $\Gamma(\mathbb Z \lsemiphi G, u_n)$. We assert that $N^I_f$ is the least non-trivial normal ideal of the kite. To show that, we prove that the normal ideal of the kite generated by any non-zero element $(0,\langle f_i\colon i \in I\rangle)\in N^I_f$ equals to $N^I_f.$ Or equivalently, we prove the same for any one-dimensional element from $N^I_f.$ Indeed,
let $f=(0,\langle f_i\colon i \in I\rangle)$ be any element from $N^I\setminus \{0\}.$ There is a one-dimensional element $g=(0,\langle g_i\colon i \in I\rangle)\in N^I$ such that $0<\langle g_i\colon i \in I\rangle \le \langle f_i\colon i \in I\rangle.$ Hence, $N^I_f=N_0(g)\subseteq N_0(f)\subseteq N^I_f,$ where $N_0(g)$ and $N_0(f)$ are normal ideals of $\Gamma(\mathbb Z \lsemiphi G, u_n)$ generated by $f$ and $g$, respectively.

Now choose an arbitrary one-dimensional element $g$ from $N^I_f$ and
let $N_0(g)$ be the normal ideal of $\Gamma(\mathbb Z \lsemiphi G, u_n)$ generated by the element $g.$
Without loss of generality assume $g=(0,\langle g_0,e,\ldots\rangle),$ where $g_0>e,$ $g_0 \in G$; this is always possible by a suitable re-indexing of $I,$ regardless of its cardinality. Then $g_0$ generates $N$ while $H$ is the least non-trivial o-ideal of $G.$

For any element $x \in [(0,e^I),(1,e^I)]$, we define two relative complements: $x^{-'}:=(1,e^I)*_{\phi} x^{-1}$ and $x^{\sim'}:=x^{-1}*_\phi (1,e^I)$; both elements are from $E_1$.

Doing double relative negations $m$ times of $(0,\langle f_i\colon i \in I\rangle)$, we obtain that  either $ (0,\langle f_i\colon i \in I\rangle)^{-'-'m}=(0,\langle f_{\phi^{m}(i)}\colon i \in I\rangle)$ and it belongs to $N_0(g)$ or $(0,\langle f_i\colon i \in I\rangle)^{\sim'\sim' m}=(0,\langle f_{\phi^{-m}(i)}\colon i \in I\rangle)$ which also belongs to $N_0(g).$  Consequently, for any $i \in I,$ there is an integer $m$ such that $\phi^{m}(i)=0$ or $\phi^{-m}(i)=0,$ so that the one-dimensional element whose $i$-th coordinate is $g_0$ is defined in $N_0(g)$ for any $i \in I$; it is either $g^{-'-'m}$ or $g^{\sim'\sim' m}.$ We see that $f^{-1}g_0f$ and $kg_0k^{-1}$ belong to $N_0(g_0)$ for all $g,k\in G^+,$ which yields, for every $g \in N,$ the one-dimensional element $(0,\langle g,e,\ldots \rangle)$ belongs to $N_0(g_0),$ and finally, every one-dimensional element $(0,\langle \ldots, g,    \ldots \rangle)$  from $N^I_f$ belongs also to $N_0(g_0).$

Now let $I_0=\{i_1,\ldots,i_m\}$ be an arbitrary finite subset of $I$, and choose arbitrary $m$ elements $h_{i_1},\ldots,h_{i_m} \in N.$ Define an $m$-dimensional element $g_{I_0}=(0,\langle g_i\colon i \in I\rangle),$ where $g_i=h_{i_k}$ if $i=i_k$ for some $k=1,\ldots,m,$ and $g_i=e$ otherwise. In addition, for $k=1,\ldots,m,$ let $\bar g_k=(0,\langle f_i\colon i \in I\rangle),$ where $f_i=h_{i_k}$ if $i=i_k$ and $f_i=e$ if $i\ne i_k.$ Then $g_{I_0}=\bar g_1+\cdots+\bar g_k\in N_0(g_0).$

Consequently, $N_0(g_0)=N_f^I.$

(2) $\Rightarrow$ (1). By Proposition \ref{pr:6.5}, $G$ has the least non-trivial o-ideal, say $H_0$ and let $N_0=H_0^+.$ Suppose that (1) does not hold. Then for all
$i,j\in I$ and every integer $m\ge 0,$ we have $\phi^m(i)\ne j$ and $\phi^{-m}(i)\ne j.$ By the assumption, there are two elements $i_0,j_0\in I$ which are disconnected.  Let $I_0$ and $I_1$ be  maximal sets of mutually connected elements containing $i_0$ and $j_0,$ respectively. Then no element of $I_0$ is connected to any element of $I_1.$

We define $N_0^{I_0}$ as the set of all elements $(0,\langle f_i\colon i \in I\rangle)$ such that $i\notin I_0$ implies $f_i=e.$ In a similar way we define $N^{I_1}.$ Then both sets are non-trivial normal ideals of the kite $n$-perfect pseudo effect algebra.

On the other hand, we have $N_0^{I_0} \cap N_0^{I_1} = \{(0,e^I)\}$ which contradicts that the kite $n$-perfect pseudo effect algebra has the least non-trivial normal ideal.
\end{proof}

If $I$ is a finite set, the subdirectly irreducible kite $n$-perfect pseudo effect algebra has the following form:

\begin{theorem}\label{th:6.7}
Let $I=\{0,1,\ldots,m-1\}$ for some $m\ge 0,$ $\lambda,\rho: I \to I$ be bijections and $G$ be a non-trivial directed po-group with \RDP$_1.$ If the kite $n$-perfect pseudo effect algebra $K^{\phi}_{I,n}(G)$ is subdirectly irreducible, then $G$ is subdirectly irreducible and $K^{\phi}_{I,n}(G)$ is isomorphic to one of:
\begin{enumerate}
\item[{\rm (1)}] $\Gamma(\frac{1}{n}\mathbb Z,1),$ if $m=0,$ $\Gamma(\mathbb Z\lex G,(1,0))$ if $m=1.$

\item[{\rm (2)}] $K^{\phi}_{I,n}(G)$ for $m\ge 2$ and $\phi(i) = i-1\ (\mathrm{mod}\, m).$

\end{enumerate}
\end{theorem}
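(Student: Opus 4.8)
The plan is to let Theorem~\ref{th:6.6} carry the structural burden and then reduce a bijection of a finite set to normal form. First dispatch the degenerate case $m=0$ directly: then $I=\emptyset$, the group $G^I$ is trivial, $\mathbb Z\lsemiphi G^\emptyset=\mathbb Z$, and $K^\phi_{I,n}(G)=\Gamma(\mathbb Z,n)\cong\Gamma(\tfrac1n\mathbb Z,1)$, which is case~(1). From now on assume $m\ge 1$. Since $G$ is a non-trivial directed po-group with \RDP$_1$ and $K^\phi_{I,n}(G)=\Gamma(\mathbb Z\lsemiphi G,u_n)$ is subdirectly irreducible, the implication (2)$\Rightarrow$(1) of Theorem~\ref{th:6.6} applies verbatim and yields at once that $G$ is subdirectly irreducible and that for all $i,j\in I$ there is an integer $k\ge 0$ with $\phi^{k}(i)=j$ or $\phi^{-k}(i)=j$. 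Thus every pair of indices lies in a common $\langle\phi\rangle$-orbit, i.e.\ $I$ is a single connected component in the sense of the paragraph preceding Theorem~\ref{th:6.6}. This already proves the first assertion and pins down the orbit structure of $\phi$.

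Now exploit finiteness of $I=\{0,1,\dots,m-1\}$. A bijection of a finite set factors into disjoint cycles, and a single orbit means exactly one cycle exhausting all $m$ points; hence $\phi$ is an $m$-cycle. If $m=1$ the only bijection is $\phi=\mathrm{Id}$, so $\mathbb Z\lsemiphi G=\mathbb Z\lex G$ and $K^\phi_{I,n}(G)=\Gamma(\mathbb Z\lex G,(n,0))$ (for the genuine kite $n=1$ this is $\Gamma(\mathbb Z\lex G,(1,0))$), giving case~(1). If $m\ge 2$ then $\phi$ is a genuine $m$-cycle, and since any two $m$-cycles are conjugate in the symmetric group there is a bijection $\sigma:I\to I$ with $\sigma\phi\sigma^{-1}=\psi$, where $\psi(i)=i-1\ (\mathrm{mod}\,m)$.

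The core step is to convert this conjugacy into an isomorphism of the ambient po-groups. I would define $\Psi:\mathbb Z\lsemiphi G\to\mathbb Z\ltimes_\psi G$ by $\Psi((k,\langle x_i\rangle))=(k,\langle x_{\sigma^{-1}(i)}\rangle)$ and check, using the twisted product~(3.1), that $\Psi$ is a group homomorphism: the required compatibility $\sigma^{-1}(\psi^{k}(i))=\phi^{k}(\sigma^{-1}(i))$ is precisely the relation $\psi^{k}=\sigma\phi^{k}\sigma^{-1}$. As $\Psi$ merely permutes coordinates it is a bijection, it preserves positivity coordinatewise and hence the lexicographic order, and it fixes $u_n=(n,e^I)$ and $0=(0,e^I)$. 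Restricting $\Psi$ to the interval therefore produces an isomorphism $K^\phi_{I,n}(G)\cong\Gamma(\mathbb Z\ltimes_\psi G,u_n)=K^\psi_{I,n}(G)$ with $\psi(i)=i-1\ (\mathrm{mod}\,m)$, which is case~(2).

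The heavy lifting is done by Theorem~\ref{th:6.6}, so the main obstacle is the bookkeeping of the last paragraph: one must verify carefully that the coordinate permutation $\Psi$ is compatible with the $\phi$-twist in~(3.1) (the only place where non-commutativity and the bijection interact), that it respects the two cones $(G^+)^I$ and $(G^-)^I$ of the kite description, and that it descends to a homomorphism of pseudo effect algebras fixing $0,1$ and the slice decomposition $E_0,\dots,E_n$. The degenerate cases $m\in\{0,1\}$ require only the separate identifications $\mathbb Z\lsemiphi G^\emptyset=\mathbb Z$ and $\mathbb Z\lsemiphi G=\mathbb Z\lex G$ together with reading off the resulting interval.
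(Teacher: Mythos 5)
Your proposal is correct and follows essentially the same route as the paper: invoke the implication (2)$\Rightarrow$(1) of Theorem~\ref{th:6.6} to get subdirect irreducibility of $G$ and connectedness of all indices, conclude that $\phi$ must be a single $m$-cycle, and then renumber $I$ along that cycle (your explicit conjugation isomorphism $\Psi$ is just the careful version of the paper's one-line ``renumber $I$ following the $\phi$-cycle''). Your side remark that for $m=1$ the interval is $\Gamma(\mathbb Z\lex G,(n,0))$ rather than $(1,0)$ unless $n=1$ is a fair observation about the statement, not a gap in your argument.
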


\begin{proof}
If $I$ is empty, the only bijection from $I$ to $I$ is the empty function. The kite $n$-perfect pseudo effect algebra $K_{\emptyset,n}^\emptyset(G)$ is an $(n+1)$-element linear effect algebra. If $m=1,$ the kite $n$-perfect pseudo effect algebra $K_{I,n}^{Id_I}(G)$ is isomorphic to the symmetric pseudo effect algebra $\Gamma(\mathbb Z\lex G,(1,0)).$

For $m\ge 2,$ we can assume that $I=\{0,1,\ldots,m-1\}$ and $\phi$ is a permutation on $I$. If $\phi$ is not cyclic, then there are $i,j \in I$ such that $j$ does not belongs to the orbit of $i,$ which means that $i$ and $j$ are not connected which contradicts Theorem \ref{th:6.6}. So $\phi$ must be cyclic. We can then renumber $I=\{0,1,\ldots,m-1\}$ following the $\phi$-cycle, so that $\phi(i) = i-1\ (\mathrm{mod}\, m),$ $i =0,1,\ldots,m-1.$
\end{proof}

In the next result we show that, for any subdirectly irreducible $n$-perfect pseudo effect algebra $K^\phi_{I,n}(G)$ with infinite $I$, $I$ has to be countable.

\begin{lemma}\label{le:6.8}
Let $I$ be an infinite set, $\phi:I \to I$ be a bijection, and let $G$ be a non-trivial directed po-group with \RDP$_1.$ If the kite $n$-perfect pseudo effect algebra $K^{\phi}_{I,n}(G)$ is subdirectly irreducible, then $I$ is at most countable.
\end{lemma}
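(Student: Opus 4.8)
The plan is to invoke Theorem \ref{th:6.6} to convert subdirect irreducibility of the kite into a purely combinatorial condition on the bijection $\phi$, and then to observe that this condition forces $I$ to be a single orbit of the cyclic group $\langle\phi\rangle$, which is automatically at most countable.

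First I would apply Theorem \ref{th:6.6}. Since $K^{\phi}_{I,n}(G)$ is subdirectly irreducible and $G$ is a non-trivial directed po-group with \RDP$_1$, the equivalence (1) $\Leftrightarrow$ (2) there gives condition (1); in particular, for all $i,j\in I$ there is an integer $m\ge 0$ with $\phi^{m}(i)=j$ or $\phi^{-m}(i)=j$. In the terminology introduced just before Theorem \ref{th:6.6}, this says precisely that any two indices of $I$ are connected, so $I$ consists of a single connected component, i.e. $\mathcal C(I)=\{I\}$.

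Next, fix an arbitrary index $i_0\in I$. The previous paragraph says exactly that every $j\in I$ has the form $j=\phi^{m}(i_0)$ for some $m\in\mathbb Z$ (the clauses $\phi^{m}(i_0)=j$ and $\phi^{-m}(i_0)=j$ with $m\ge 0$ together let $m$ range over all of $\mathbb Z$). Hence the map $\psi:\mathbb Z\to I$ given by $\psi(m)=\phi^{m}(i_0)$ is surjective, whence $|I|\le |\mathbb Z|=\aleph_0$. Since $I$ is infinite by hypothesis, we conclude $|I|=\aleph_0$, that is, $I$ is countably infinite.

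The argument is short, and its only real content is the reduction via Theorem \ref{th:6.6}; there is no serious obstacle beyond correctly recognizing that the connectivity requirement in condition (1) makes $I$ a single $\mathbb Z$-orbit. The one point deserving care is that the two clauses $\phi^{m}(i)=j$ and $\phi^{-m}(i)=j$ with the single bound $m\ge 0$ must be combined so that the exponents genuinely exhaust $\mathbb Z$, ensuring $\psi$ is onto.
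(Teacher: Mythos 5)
Your proof is correct and is essentially the paper's argument: both reduce to the connectivity condition of Theorem \ref{th:6.6} and observe that the $\langle\phi\rangle$-orbit $\{\phi^m(i_0): m\in\mathbb Z\}$ of a single index is at most countable yet must exhaust $I$. The paper merely phrases this as a contradiction (if $I$ were uncountable, some $j$ would lie outside the orbit $P(i)$ and hence be disconnected from $i$), whereas you argue directly via the surjection $\mathbb Z\to I$; the content is the same.
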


\begin{proof}
Suppose $I$ is uncountable and choose an element $i\in I.$ Consider the set $P(i)=\{\phi^m(i):\ m \ge 0\} \cup \{\phi^{-m}(i): m\ge 0\}.$ The set $P(i)$ is at most countable, so there is $j \in I\setminus P(i).$ But $P(i)$ exhausts all finite paths alternating $\phi$ starting from $i$. Then $i$ and $j$ are disconnected which contradicts Theorem \ref{th:6.6}. Hence, $I$ is at most countable.
\end{proof}

If $|I|=\aleph_0$, subdirectly irreducible kite $n$-perfect pseudo effect algebras are the following form:

\begin{theorem}\label{th:6.9}
Let $|I|=\aleph_0,$ $\phi: I \to I$ be a bijection, and let $G$ be a non-trivial directed po-group with \RDP$_1.$ If the kite $n$-perfect pseudo effect algebra $K^{\phi}_{I,n}(G)$ is subdirectly irreducible, then  $K^{\phi}_{I,n}(G)$ is isomorphic to $K^{\phi}_{\mathbb Z,n}(G)$ where $\phi(i)=i-1,$ $i \in \mathbb Z.$
\end{theorem}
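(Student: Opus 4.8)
The plan is to use Theorem \ref{th:6.6} to pin down the combinatorial shape of $\phi$, and then to transport the whole structure along a relabeling of the index set.

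First I would invoke Theorem \ref{th:6.6}. Since $G$ is a non-trivial directed po-group with \RDP$_1$ and $K^{\phi}_{I,n}(G)$ is subdirectly irreducible, condition (2) of that theorem holds, hence so does (1): $G$ is subdirectly irreducible and for every pair $i,j\in I$ there is an integer $m\ge 0$ with $\phi^{m}(i)=j$ or $\phi^{-m}(i)=j$. In other words the cyclic group $\langle\phi\rangle$ acts transitively on $I$, so $I$ is a single orbit $\{\phi^{m}(i_0)\colon m\in\mathbb Z\}$ for any fixed $i_0\in I$. The next point, and the one that genuinely uses $|I|=\aleph_0$, is to rule out periodicity: if $\phi^{m}(i_0)=\phi^{m'}(i_0)$ for some $m>m'$, then $\phi^{m-m'}(i_0)=i_0$ and the orbit~-- hence $I$~-- would be finite, contradicting $|I|=\aleph_0$. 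Therefore the powers of $\phi$ applied to $i_0$ are pairwise distinct, and $\phi$ is a single bi-infinite cycle.

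Next I would set up the relabeling. Define $\psi\colon\mathbb Z\to I$ by $\psi(m)=\phi^{-m}(i_0)$; by the previous paragraph $\psi$ is a bijection. A direct computation gives the intertwining relation $\phi(\psi(m))=\phi^{1-m}(i_0)=\psi(m-1)$, i.e.\ $\phi\circ\psi=\psi\circ\phi_0$, where $\phi_0(m)=m-1$ on $\mathbb Z$; iterating yields $\phi^{n}\circ\psi=\psi\circ\phi_0^{\,n}$ for all $n\in\mathbb Z$. I would then define the reindexing map $\Psi\colon(k,\langle x_i\colon i\in I\rangle)\mapsto(k,\langle x_{\psi(m)}\colon m\in\mathbb Z\rangle)$ from $\mathbb Z\lsemiphi G$ over the index set $I$ with bijection $\phi$ onto $\mathbb Z\lsemiphi G$ over the index set $\mathbb Z$ with bijection $\phi_0$. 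Clearly $\Psi$ is a bijection that preserves the lexicographic order, and it sends $u_n=(n,e^I)$ to $(n,e^{\mathbb Z})=u_n$.

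The heart of the argument is checking that $\Psi$ respects the twisted multiplication $*_\phi$ defined in $(3.1)$. Evaluating $\Psi$ on $(n,\langle x_i\rangle)*_\phi(m,\langle y_i\rangle)=(n+m,\langle x_i y_{\phi^{n}(i)}\rangle)$ produces the sequence whose $p$-th coordinate is $x_{\psi(p)}\,y_{\phi^{n}(\psi(p))}$, while multiplying the images $\Psi(n,\langle x_i\rangle)$ and $\Psi(m,\langle y_i\rangle)$ in the target produces the coordinate $x_{\psi(p)}\,y_{\psi(\phi_0^{\,n}(p))}$; these coincide precisely because of the relation $\phi^{n}\circ\psi=\psi\circ\phi_0^{\,n}$. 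Hence $\Psi$ is an isomorphism of unital po-groups, and restricting it to the intervals $\Gamma(\cdot,u_n)$ gives the required isomorphism $K^{\phi}_{I,n}(G)\cong K^{\phi_0}_{\mathbb Z,n}(G)$ with $\phi_0(i)=i-1$. I expect no deep obstacle here: the only delicate points are the exclusion of periodic orbits (which forces $\phi$ to be a bi-infinite cycle) and the bookkeeping of the index shift inside the multiplication; everything else is a routine transport of structure along $\psi$.
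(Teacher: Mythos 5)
Your proposal is correct and follows essentially the same route as the paper: invoke Theorem \ref{th:6.6} to conclude that $\phi$ acts with a single orbit, then relabel via $j_m=\phi^{-m}(i_0)$ so that $\phi$ becomes $i\mapsto i-1$ on $\mathbb Z$. You merely spell out two steps the paper leaves implicit --- ruling out periodic orbits using $|I|=\aleph_0$ and verifying that the relabeling intertwines the twisted multiplication $*_\phi$ --- and both checks are carried out correctly.
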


\begin{proof}
If $\phi$ is not cyclic, there would be two elements which should be disconnected which is impossible. Therefore, there is an element $i_0\in I$ such that the orbit $P(i_0):=\{\phi^m(i_0): m \in \mathbb Z\}=I.$ Hence, we can assume that $I=\mathbb Z,$ and $\phi(i)=i-1,$ $i \in \mathbb Z$; indeed, if we set $j_m=\phi^{-m}(i_0),$ $m\in \mathbb Z,$ then $\phi(j_m)=j_{m-1}$ and we have $\phi(i)=i-1,$ $i \in \mathbb Z.$
\end{proof}

\begin{lemma}\label{le:7.4}
Let $K^{\phi}_{I,n}(G)$ be a kite $n$-perfect pseudo effect algebra of a directed po-group $G$ satisfying \RDP$_1.$ Then
$K^{\phi}_{I,n}(G)$ is a subdirect
product of the system of kite $n$-perfect pseudo effect algebras $(K^{\phi'}_{I',n}(G)\colon I'\in \mathcal C(I))$, where $I'$ is any
connected component of $I$,  and $\phi: I'\to I'$ is the restrictions of $\phi$ onto $I'$.
\end{lemma}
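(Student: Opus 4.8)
The plan is to realize the subdirect decomposition by the single map that restricts each coordinate sequence to the connected components of $I$.

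First I would record the structural fact that makes everything work: as observed in the paragraph preceding Theorem~\ref{th:6.6}, every connected component $I'\in\mathcal C(I)$ satisfies $\phi^{-1}(I')=I'$, and hence also $\phi(I')=I'$ and $\phi^k(I')=I'$ for every $k\in\mathbb Z$. Thus the restriction $\phi'$ of $\phi$ to $I'$ is a bijection of $I'$, and $K^{\phi'}_{I',n}(G)=\Gamma(\mathbb Z\,\overrightarrow{\ltimes_{\phi'}}\,G,(n,e^{I'}))$ is a well-defined kite $n$-perfect pseudo effect algebra over the power $G^{I'}$. I then define
\[
h\colon K^{\phi}_{I,n}(G)\longrightarrow \prod_{I'\in\mathcal C(I)}K^{\phi'}_{I',n}(G),\qquad h\big((k,\langle x_i\colon i\in I\rangle)\big)=\big((k,\langle x_i\colon i\in I'\rangle)\big)_{I'\in\mathcal C(I)},
\]
that is, $h$ sends an element to the family of its restrictions to the components.

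Next I would check that $h$ is a homomorphism of pseudo effect algebras. The crucial point is that the semidirect multiplication (3.1) respects the decomposition $I=\biguplus_{I'\in\mathcal C(I)}I'$: for $i\in I'$ the $i$-th coordinate of $(k,\langle x_i\rangle)*_\phi(l,\langle y_i\rangle)$ is $x_iy_{\phi^k(i)}$ with $\phi^k(i)\in I'$, so it depends only on the restrictions of the two factors to $I'$; the same holds for inversion, since $\phi^{-k}(i)\in I'$. Hence each component map is a group homomorphism into $\mathbb Z\,\overrightarrow{\ltimes_{\phi'}}\,G$, and therefore so is $h$. Since the order on $\mathbb Z\lsemiphi G$ is lexicographic, $h$ is order-preserving, sends the unit $u_n=(n,e^I)$ to the family of units $(n,e^{I'})$ and $0$ to $0$, and in particular maps the interval $[0,u_n]$ into $\prod_{I'}[0,(n,e^{I'})]$, because $0\le(k,\langle x_i\rangle)\le u_n$ forces $0\le(k,\langle x_i\colon i\in I'\rangle)\le(n,e^{I'})$ for each $I'$. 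As the partial addition in each kite is the restriction of the group multiplication to its interval, whenever $a+b$ is defined in $K^{\phi}_{I,n}(G)$ one gets $h(a)+h(b)=h(a*_\phi b)=h(a+b)$ defined in the product; thus $h$ is a pseudo effect algebra homomorphism.

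Finally I would verify the two conditions for a subdirect embedding. Injectivity is immediate: since $I=\biguplus_{I'\in\mathcal C(I)}I'$, knowing $(k,\langle x_i\colon i\in I'\rangle)$ for all $I'$ recovers both $k$ and the full sequence $\langle x_i\colon i\in I\rangle$. For surjectivity of each projection $\pi_{I'}\circ h$, take any $(k,\langle y_i\colon i\in I'\rangle)\in K^{\phi'}_{I',n}(G)$ and extend it to $(k,\langle x_i\colon i\in I\rangle)$ by setting $x_i=e$ for $i\notin I'$. Membership in the interval is preserved: if $k=0$ then $y_i\ge e$ and $e\ge e$, if $k=n$ then $y_i\le e$ and $e\le e$, and for $0<k<n$ there is no constraint; hence $(k,\langle x_i\rangle)\in K^{\phi}_{I,n}(G)$ and its $I'$-restriction is the prescribed element. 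So $\pi_{I'}\circ h$ is onto $K^{\phi'}_{I',n}(G)$, and $h$ is the desired subdirect embedding. There is no deep obstacle here; the proof is essentially bookkeeping. The one point that must be handled with care is the decomposition of the multiplication $*_\phi$ along components, which rests entirely on the $\phi$-invariance $\phi(I')=I'$; without it the restriction map would fail to be multiplicative. The only other thing to watch is the interval-membership check in the surjectivity step, so that extending by identities does not leave the kite.
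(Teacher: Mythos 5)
Your proof is correct, and the underlying decomposition is the same one the paper uses: restrict coordinate sequences to the connected components, which is legitimate precisely because $\phi^k(I')=I'$ for every component $I'$ and every $k\in\mathbb Z$. The presentation, however, is genuinely different. The paper never writes down your map $h$; instead, for each $I'\in\mathcal C(I)$ it introduces the normal ideal $N_{I'}$ of elements $(0,\langle f_i\colon i\in I\rangle)$ supported off $I'$, asserts that $K^{\phi}_{I,n}(G)/N_{I'}\cong K^{\phi'}_{I',n}(G)$ (an isomorphism that is, in effect, your restriction map), and concludes from $\bigcap_{I'\in\mathcal C(I)}N_{I'}=\{(0,e^I)\}$ that the kite embeds subdirectly into the product of the quotients. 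That route leans on the correspondence between normal ideals and congruences for pseudo effect algebras with \RDP$_1$ recalled in Section~4, which is where the hypothesis \RDP$_1$ enters; your explicit construction of $h$, together with the verification that each $\pi_{I'}\circ h$ is a surjective homomorphism and that $h$ is injective, is more elementary and self-contained, and as you observe it is essentially bookkeeping that does not actually invoke \RDP$_1$. What the paper's formulation buys is uniformity with the rest of the section, where subdirect irreducibility is analyzed entirely through least non-trivial normal ideals, so indexing the factors by the ideals $N_{I'}$ feeds directly into Theorems~\ref{th:6.6} and~\ref{th:7.5}. Your key observation that multiplicativity of the restriction map rests on the $\phi$-invariance of the components is exactly the point the paper uses implicitly when claiming $K^{\phi}_{I,n}(G)/N_{I'}\cong K^{\phi'}_{I',n}(G)$.
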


\begin{proof}
Let $I'$ be a connected component of $I$. As it was mentioned just before Theorem \ref{th:6.6}, we see that every restriction $\phi'$ of $\phi$ onto  $I'$ of $I$, $\phi':I'\to I'$, is a bijection. Let $I'$ be a connected component of $I$.
Let $N_{I'}$ be the set of all elements
$f = (0,\langle f_i: i \in I\rangle)$ with $f_i\ge 0$ for each $i\in I$
such that  $f_i = e$ whenever $i\in I'$.
It is straightforward to see that $N_{I'}$ is a normal ideal
of $K^{\phi}_{I,n}(G)$. It is also not difficult
to see that $K^{\phi}_{I,n}(G)/N_{I'}$ is isomorphic to
$K^{\phi'}_{I',n}(G)$.

Now, let $\mathcal C(I)$ be the set of all connected components of $I$, and
for each $I'\in \mathcal C(I),$ let $N_{I'}$ be the normal filter defined as above.
As connected components are disjoint, we have
$\bigcap_{I'\in \mathcal C(I)} N_{I'} = \{(0,e^I)\}.$ This proves $K^{\phi}_{I,n}(G)\leq \prod_{I'\in \mathcal C(I)} K^{\phi'}_{I',n}(G).$
\end{proof}

Now we are ready to present an analogue of the Birkhoff representation theorem, \cite[Thm
II.8.6]{BuSa}, for kite $n$-perfect pseudo effect algebras with RDP$_1$. 

\begin{theorem}\label{th:7.5}
Every kite $n$-perfect pseudo effect algebra with \RDP$_1$ is  a subdirect product of subdirectly irreducible kite $n$-perfect pseudo effect algebras with \RDP$_1.$
\end{theorem}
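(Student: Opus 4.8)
The plan is to build the required representation in two independent stages---first splitting off the index set, then the group $G$---so that each factor is a kite by construction. Write $E=K^\phi_{I,n}(G)=\Gamma(\mathbb Z\lsemiphi G,u_n)$. When $G$ is trivial, $E$ is a finite chain and hence subdirectly irreducible, so the statement is immediate; thus assume $G$ is non-trivial. First I would invoke Lemma~\ref{le:7.4} to represent $E$ subdirectly inside $\prod_{I'\in\mathcal C(I)}K^{\phi'}_{I',n}(G)$, where $I'$ runs over the connected components of $I$ and $\phi'=\phi|_{I'}$. By Theorem~\ref{th:3.4} each factor again has \RDP$_1$, and its index set $I'$ is connected, which is precisely the combinatorial half of condition~(1) of Theorem~\ref{th:6.6}. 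It therefore remains to decompose each $K^{\phi'}_{I',n}(G)$ into subdirectly irreducible kites, and for that it suffices to decompose $G$.

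Second, I would prove a Birkhoff-type decomposition for the po-group $G$ itself. For every $g\in G$ with $g\neq e$, Zorn's lemma produces an o-ideal $H_g$ maximal among o-ideals that do not contain $g$; the chain hypothesis holds because the union of a chain of convex normal directed subgroups avoiding $g$ is again a convex normal directed subgroup avoiding $g$. By maximality, every non-trivial o-ideal of $G/H_g$ contains the image $\bar g\neq\bar e$, so the o-ideal generated by $\bar g$ is the least non-trivial one; hence $G/H_g$ is non-trivial and subdirectly irreducible. It stays directed, and since o-ideal quotients preserve the Riesz properties \cite[Prop 6.1]{174}, it still satisfies \RDP$_1$. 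Finally $\bigcap_{g\neq e}H_g=\{e\}$, since $g\notin H_g$ for each $g\neq e$.

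Third, I would transport this to the kite level along the functor $K^{\phi'}_{I',n}(-)$. The coordinatewise assignment $(k,\langle x_i\rangle)\mapsto(k,\langle x_iH_g\rangle)$ is a surjective homomorphism $K^{\phi'}_{I',n}(G)\to K^{\phi'}_{I',n}(G/H_g)$ of pseudo effect algebras: it respects $*_\phi$ and the unit $u_n$, and carries the interval onto the interval, because over an o-ideal quotient a positive (resp.\ negative) element lifts to a positive (resp.\ negative) one. Assembling these maps over all $g\neq e$ gives a homomorphism into $\prod_{g\neq e}K^{\phi'}_{I',n}(G/H_g)$ that is injective: equal images in every factor force equal $\mathbb Z$-components and coordinates differing by elements of $\bigcap_g H_g=\{e\}$. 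As each projection is onto, this is a subdirect representation, and by Theorem~\ref{th:6.6} (connected $I'$ together with the subdirectly irreducible $G/H_g$) every factor $K^{\phi'}_{I',n}(G/H_g)$ is a subdirectly irreducible kite $n$-perfect pseudo effect algebra with \RDP$_1$.

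Composing the two stages---$E$ into the product over $\mathcal C(I)$, and each $K^{\phi'}_{I',n}(G)$ into the product over the $H_g$---exhibits $E$ as a subdirect product of the subdirectly irreducible kites $K^{\phi'}_{I',n}(G/H_g)$; surjectivity of the composite projections follows from surjectivity at each stage. I expect the main obstacle to lie in the second stage: checking the po-group Birkhoff argument in full, namely that the lattice of o-ideals is rich enough for the Zorn construction, that the least non-trivial o-ideal of $G/H_g$ is indeed the one generated by $\bar g$, and that directedness and \RDP$_1$ genuinely descend to these quotients. Everything afterwards is a routine transport along $K^{\phi'}_{I',n}(-)$.
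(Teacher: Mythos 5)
Your argument is correct and matches the paper's proof in essentials: the paper likewise combines Lemma~\ref{le:7.4} (splitting over the connected components of $I$) with a subdirect decomposition of the directed po-group $G$ into subdirectly irreducible po-groups with \RDP$_1$, and then applies Theorem~\ref{th:6.6} to identify each resulting factor as subdirectly irreducible. The only differences are that the paper performs the two stages in the opposite order and imports the po-group decomposition from the literature, whereas you prove the Zorn/Birkhoff step for o-ideals and the transport along $K^{\phi'}_{I',n}(-)$ explicitly.
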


\begin{proof}
Consider a kite $n$-perfect pseudo effect algebra $K^{\phi}_{I,n}(G)$. By Theorems \ref{th:3.4}--\ref{th:3.5}, $G$ satisfies RDP$_1.$ If the kite is not subdirectly
irreducible, then Theorem \ref{th:6.6},  we have two possible situations:
(i) $G$ is not subdirectly irreducible, or
(ii) $G$ is subdirectly irreducible but
there exist $i,j\in I$ such that, for every
$m\in\mathbb N,$ we have $\phi^m(i)\neq j$ and
$\phi^{-m}(i)\neq j$. Observe that this happens if and only if
$i$ and $j$ do not belong to the same connected component of $I$.

By \cite[Lem 3.4]{DvHo}, every directed po-group with RDP$_1$ is a subdirect product of subdirectly irreducible po-groups with RDP$_1$. Hence,
we can reduce (i) to (ii). So, suppose $G$ is subdirectly irreducible.
Then, using Lemma~\ref{le:7.4}, we can subdirectly embed
$K^{\phi}_{I,n}(G)$ into
$\prod_{I'} K^{\phi'}_{I',n}(G)$, where $I'$
ranges over the connected components of $I,$  and $\phi'$ is the restriction of $\phi$ onto $I'$. But then, each
$K^{\phi}_{I,n}(G)$ is subdirectly irreducible in view of Theorem \ref{th:6.6}, and by Theorems \ref{th:3.4}--\ref{th:3.5}, it satisfies RDP$_1.$
\end{proof}

\section{Conclusion}

In the paper we have extended the study of a special class of pseudo effect algebras called kite pseudo effect algebras, or simply kites, originally introduced in \cite{DvuK} and motivated by the research in \cite{DvKo}. They are connected with a fixed po-group $G$, an index set $I$ and with two bijections $\lambda,\rho:I \to I$, and the elements of the kite are from $G^I$. Using different bijections, the resulting algebra can be non-commutative. The construction resembles the so-called wreath product, \cite{Dar, Gla}.  In \cite{DvuK}, there was formulated an open problem: Characterize a unital po-group $(H,u)$ with a special kind of RDP such that the given kite is isomorphic to some interval in the positive cone of $(H,u)$. The solution is given by a special kind of the lexicographic extension of $G$, and the main effort was done showing that this extension satisfies RDP$_1$, see Theorems \ref{th:3.4}--\ref{th:3.5}. In a special case when $G$ satisfies RDP$_2$, the solution is given, too, Theorem \ref{th:3.7}.

We extend the notion of kite pseudo effect algebras introducing the class of $n$-perfect pseudo effect algebras. We characterize subdirectly irreducible algebras which are basic building bricks of the theory of kite perfect pseudo effect algebras, see Theorems \ref{th:6.6}, \ref{th:6.9}, and \ref{th:7.5}.

Finally, we have formulated a problem to characterize a pseudo effect algebra to be  isomorphic to some kite $n$-perfect pseudo effect algebra.

\end{document}